\documentclass[12 pt]{amsart}

\usepackage{amsmath, amssymb, mathtools, palatino, euler, epic, eepic, xypic, floatflt, microtype}
\usepackage{graphicx}
\usepackage[usenames]{xcolor}
\usepackage{float}
\usepackage{tikz}
\usepackage{subcaption}

\usepackage{fullpage}
\usepackage{amscd,amsthm}
\usepackage{comment}
\usepackage{color}

\usepackage[margin=1 in]{geometry}
\definecolor{darkblue}{rgb}{0,0,0.7}
\definecolor{darkred}{rgb}{0.7,0,0}
\usepackage[colorlinks,linkcolor=darkred, citecolor=darkblue,
pagebackref=true,pdftex]{hyperref}

\DeclareMathOperator\dual{Dual}
\DeclareMathOperator\init{in}
\DeclareMathOperator\fin{fin}

\newcommand\C{\mathcal C}
\newcommand\D{\mathcal D}
\DeclareMathOperator{\lk}{link}
\DeclareMathOperator{\del}{del}
\DeclareMathOperator\ind{Ind}

\newcommand\set[1]{\left\{ #1 \right\}}
\newcommand\abs[1]{\left| #1 \right|}
\newcommand\parens[1]{\left( #1 \right)}

\newcommand\ideal[1]{\langle #1 \rangle}

    \newtheorem{thm}{Theorem}[section]
    \newtheorem{prop}[thm]{Proposition}
    \newtheorem{lem}[thm]{Lemma}
    
    \newtheorem{cor}[thm]{Corollary}
    \newtheorem{defn}[thm]{Definition}
    \newtheorem{rem}[thm]{Remark}
    \newtheorem{example}[thm]{Example}
    \newtheorem{question}[thm]{Question}

\definecolor{FoundersBlue}{RGB}{0,59,112} 
\definecolor{AntonMaroon}{RGB}{80, 18, 20} 
\definecolor{AntonRojo}{RGB}{227, 40, 73} 

\begin{document}

\title[Chordality, syzygies, and shellability for hypergraphic analogues of interval graphs]{Chordality, syzygies, and shellability \\ for hypergraphic analogues of interval graphs}

\author[A. Dochtermann]{Anton Dochtermann}
\address{Department of Mathematics, Texas State University, San Marcos, TX 78666, USA}
\email{dochtermann@txstate.edu}

\author[B. Goeckner]{Bennet Goeckner}
\address{Department of Mathematics, University of San Diego, San Diego, CA 92110, USA}
\email{bgoeckner@sandiego.edu}

\author[M. Pavelka]{Marta Pavelka}
\address{QMATH, University of Copenhagen, Universitetsparken 5, DK-2100 Copenhagen, Denmark}
\email{mp@math.ku.dk}

\keywords{Monomial ideals, Stanley-Reisner rings, Hypergraph edge ideals, Interval graphs, Chordal clutters, Linear resolutions, Shellability}
\subjclass[2020]{05E40, 05E45, 13F55, 13D02, 05C65, 05C62}

\begin{abstract}

 Interval graphs are a special class of chordal graphs, and hence have connections to commutative algebra via Fr\"oberg's theorem that characterizes linear resolutions of squarefree quadratic ideals. In recent years, several hypergraphic analogues of interval and chordal graphs have been proposed, in part as an effort to extend Fr\"oberg's theorem to ideals generated in higher degree. In this paper, we study two such classes from the literature, cointerval hypergraphs and underclosed complexes, and show that they are in fact equivalent up to complementation. We then consider their place in the broader theory of higher-dimensional chordality, proving that an underclosed clutter is chordal in the sense of Woodroofe. As a consequence, we answer a question of Dochtermann and Engstr\"om by showing that the associated Alexander dual complexes are vertex decomposable, implying that the corresponding circuit ideals have linear quotients. We furthermore show that these dual complexes have shellings induced by their underclosed vertex orders.
    \end{abstract}
    
\maketitle

\section{Introduction}

 Many applications of graph theory involve understanding how properties defining various classes of graphs translate to the setting of algebra and topology. In recent years, some of these classes of graphs have been generalized to hypergraphs (where edges can have arbitrary cardinality), with the goal of extending these connections to higher-dimensional (or often higher degree) settings. 

For instance, the class of \emph{chordal} graphs have a close connection to commutative algebra: a result of Fr\"oberg says that a graph $G$ is the complement of a chordal graph if and only if the edge ideal $I(G)$ has a linear resolution (see below for definitions).
A substantial body of work has been devoted to finding a `higher-dimensional' version of Fr\"oberg's theorem, i.e. to find a combinatorial characterization (or at least necessary or sufficient conditions) of $k$-uniform \emph{hypergraphs} whose edge ideals (in this context often called \emph{circuit ideals}) have a linear resolution. Note that the `circuit ideal of a $k$-uniform hypergraph' is just another name for a square-free monomial ideal generated in degree $k$. 

In recent years, a number of authors have introduced various candidates for the notion of a chordal hypergraph. The general idea is to use one of the many characterizations of chordal graphs and consider higher-dimensional analogues. For instance, the property of every $n$-cycle `having a chord' 
is generalized in \cite{ConFar}, the existence of a \emph{perfect elimination order} and the related notions of a \emph{simplicial} vertex/ridge is studied in different ways for higher-dimensional complexes and hypergraphs in \cite{StabilityChordal}, \cite{EmtanderChordal}, and \cite{Russ09}, the characterization of chordal graphs in terms of removing \emph{exposed} edges is pursued in \cite{DocExposed}, and the \emph{Leray property} of clique complexes of chordal graphs is generalized to simplicial complexes in \cite{AdiNevSam}.

In \cite{DocEng}, the first author and Engstr\"om seek to describe (minimal) \emph{cellular} resolutions of hypergraph circuit ideals that have linear resolutions; that is, to construct polyhedral complexes whose face structure encodes the (minimal) syzygies of such ideals. They show that if $\C$ is a so-called \emph{cointerval} $k$-hypergraph, then the circuit ideal $I(\C)$  has a minimal linear resolution supported on a polyhedral complex that parametrizes certain hypergraph \emph{homomorphisms}.

Cointerval $k$-hypergraphs have a recursive definition (see below), and in the case $k=2$, it recovers the set of graphs that are complements of interval graphs (hence the name). Interval graphs, defined as the intersection graphs of intervals in the real line, are a well studied class of graphs that form a proper subclass of chordal graphs. Thus if one is seeking a higher-dimensional analogue of Fr\"oberg's theorem, one should view a cointerval $k$-hypergraph as an example of a `complement of a chordal hypergraph'. 

Interval graphs can also be characterized by the existence of an ordering of the underlying vertex set that satisfies a certain closure property.  In \cite{BenSecVar}, Benedetti, Seccia, and Varbaro use this description to define the notion of an \emph{underclosed} $k$-hypergraph (so that in particular, an underclosed $2$-hypergraph is an interval graph). The authors of \cite{BenSecVar} work with pure simplicial complexes in their definition, but for our purposes the hypergraph/clutter perspective will be more convenient.
The motivation in \cite{BenSecVar} comes from extending certain results from the theory of binomial edge ideals (associated with a graph $G$) to the setting of determinantal facet ideals (defined by a uniform hypergraph).

\subsection{Our contributions}
In this work, we study combinatorial and algebraic properties of complexes and ideals defined by cointerval and underclosed hypergraphs, and also seek to understand how the different classes relate to each other. 

 As mentioned above, we work in the setting of hypergraphs, where by definition a hypergraph $\C$ is a collection of subsets (called circuits) of some ground set $V = V(\C)$. We will always assume that our hypergraphs are in fact \emph{clutters}, meaning that no circuit contains another. When all the circuits of $\C$ have the same cardinality (say $k$), we say that $\C$ is a uniform $k$-hypergraph (or simply $k$-hypergraph). Note that any uniform $k$-hypergraph is a clutter, in this context, we will use both $k$-hypergraph and $k$-clutter.

Our first result demonstrates a close connection between the cointerval $k$-hypergraphs from \cite{DocEng} and the underclosed complexes from \cite{BenSecVar}. In what follows, if ${\mathcal C}$ is a $k$-clutter with vertex set $V$ then its complement $\overline{\mathcal C}$ is the clutter on the same vertex set whose circuits are the $k$-subsets of $V$ that are not circuits of ${\mathcal C}$.

\newtheorem*{thm:complement}{Theorem \ref{thm:complement}}
\begin{thm:complement}
A $k$-clutter ${\mathcal C}$ is cointerval if and only if its complement $\overline{\mathcal C}$ is underclosed.
\end{thm:complement}

We refer to Section \ref{sec:background} for relevant definitions. We next investigate how underclosed clutters (which we now know include complements of cointerval $k$-hypergraphs) are related to other notions of chordality from the literature.

As pointed out in \cite[Remark~36]{BenSecVar}, the class of underclosed $k$-clutters does not sit inside the class of chordal clutters as defined by Emtander in \cite{EmtanderChordal}. However, we are able to relate underclosed complexes to a class introduced by Woodroofe in \cite{Russ09}, which we will refer to as `chordal clutters'.

\newtheorem*{thm:underclosedchordal}{Theorem \ref{thm:underclosedchordal}}
\begin{thm:underclosedchordal}
Suppose ${\mathcal C}$ is an underclosed (not necessarily uniform) clutter. Then ${\mathcal C}$ is chordal. 
\end{thm:underclosedchordal}

 This leads to a number of corollaries and applications, and in particular answers a question from \cite{DocEng} that we sketch here. In \cite{DocEng} the authors show that the circuit ideal $I({\mathcal C})$ of any cointerval $k$-hypergraph ${\mathcal C}$ has a linear resolution. A stronger condition, known as `having linear quotients', would follow from a topological property of a related simplicial complex. In particular, since $I({\mathcal C})$ is a squarefree monomial ideal, it can be recovered as the Stanley--Reisner ideal $I({\mathcal C}) = I_\Delta$ of some simplicial complex $\Delta$ (see Section \ref{sec:commalg}). 
In \cite{DocEng}, the authors ask whether the Alexander dual of this related complex $\Delta$ (which we denote by $\dual(\C)$ here) is shellable --- this would imply that the ideal $I({\mathcal C})$ has linear quotients. As a corollary of Theorem \ref{thm:underclosedchordal} we obtain a positive answer to this question.

\newtheorem*{cor:dualisVD}{Corollary \ref{cor:dualisVD}}
\begin{cor:dualisVD}
Suppose ${\mathcal C}$ is an underclosed $k$-clutter. Then $\dual(\C)$ is vertex decomposable and thus shellable.
\end{cor:dualisVD}

Our last result addresses further structure of these dual complexes. By definition, a shelling of a simplicial complex $\Delta$ involves ordering its facets. Since each facet is a subset of the vertex set $V$, a natural question to ask is whether such an ordering can be obtained as the lexicographic order induced by some linear ordering of $V$. Complexes with this lexicographic (or lex) shelling properties arise naturally (in fact can be used to characterize the independence complexes of matroids). Unlike the relationship between vertex decomposability and shellability in general, it is shown in \cite{GP24} that there exist vertex decomposable complexes that are not lex shellable. We show that our dual complexes do in fact have this lex shelling property.

\newtheorem*{thm:DualIsLexShell}{Theorem \ref{thm:DualIsLexShell}}
\begin{thm:DualIsLexShell}

Suppose $\C$ is a uniform underclosed clutter with respect to the labeling of the vertex set. Then $\dual(\C)$ is lex shellable with respect to this ordering. 
\end{thm:DualIsLexShell}

\subsection{Organization}
The paper is organized as follows. In Section \ref{sec:background} we provide definitions and notation, including basic properties of simplicial complexes and relevant notions from commutative algebra. In Section \ref{sec:underclosedcoint} we remind the definitions of underclosed and cointerval clutters, and relate these classes of clutters in Theorem \ref{thm:complement}. In Section \ref{sec:chordal} we recall the notion of chordal clutters from \cite{Russ09} and prove Theorem \ref{thm:underclosedchordal}, showing that underclosed clutters are chordal. In Section \ref{sec:shell} we discuss the notion of lexicographic (lex) shellings and prove Theorem \ref{thm:DualIsLexShell}, which shows that duals of underclosed clutters have lex shellings. In Section \ref{sec:further} we present some open questions and discuss possible avenues for further research.

\section{Background} \label{sec:background}
 We start with some basic definitions that will be used throughout the paper.
A \emph{simplicial complex} $\Delta$ on vertex $V = V(\Delta)$ is a collection of subsets of $V$ (called \emph{faces} or \emph{simplices}) such that if $\sigma$ is a face of $\Delta$ and $\tau \subseteq \sigma$, then $\tau$ is also a face of $\Delta$. The singleton faces $\{v\}$ of $\Delta$ are called \emph{vertices} of $\Delta$, but we do not require that all elements of $V$ are faces of $\Delta$.  
A \emph{facet} of $\Delta$ is a face that is maximal (under inclusion of sets). The \emph{dimension} of a face $\sigma \in \Delta$ is $\dim(\sigma)= \abs{\sigma}-1$ and the \emph{dimension} of $\Delta$ is $\dim(\Delta) = \max \set{ \dim (\sigma) : \sigma \in \Delta}.$  We say that $\Delta$ is \emph{pure} if all its facets have the same cardinality. Facets $F$ and $G$ of a pure $(d-1)$-dimensional complex are \emph{adjacent} if they intersect in a codimension one face, i.e. $|F \cap G| = d-1$.

A \emph{hypergraph} ${\mathcal C}$ on vertex set $V = V({\mathcal C})$ is a collection of subsets $E = E(\C)$  of $V$ (called \emph{circuits} or \emph{edges}). We say that ${\mathcal C}$ is a \emph{clutter} if no circuit is properly contained in another. A \emph{$k$-circuit} is a circuit of ${\mathcal C}$ that has $k$-elements, and ${\mathcal C}$ is \emph{$k$-uniform} if every circuit of ${\mathcal C}$ is a $k$-circuit. In this case, we will say that ${\mathcal C}$ is a \emph{$k$-clutter}. Note that all $k$-uniform hypergraphs are $k$-clutters.  If $\C$ is a $k$-clutter on vertex set $V$, the \emph{complement} $\overline{\C}$ is the $k$-clutter whose circuits are given by all $k$-subsets of $V$ that are not circuits of $\C$.

For a clutter ${\mathcal C}$, its \emph{independence complex} $\ind({\mathcal C})$ is the simplicial complex on vertex set $V$ whose faces are given by all subsets of $V$ that do not contain a circuit.  On the other hand, for a simplicial complex $\Delta$, its \emph{minimal nonface clutter} ${\mathcal C}(\Delta)$ consists of all minimal nonfaces of $\Delta$. It is straightforward to verify that ${\mathcal C}(\ind({\mathcal C})) = {\mathcal C}$ and $\ind({\mathcal C}(\Delta)) = \Delta$. 

All simplicial complexes and clutters considered here have finite vertex sets. For a positive integer $n$, we use the convention $[n]=\set{1,2,\dots, n}.$

\begin{rem}
In what follows, we will often assume that the vertex set of our simplicial complex (or clutter) is given by $V = [n]$ (or more generally $V \subseteq {\mathbb Z}$). In this case, unless otherwise specified, we write our faces (or circuits) in increasing order $F = a_1a_2 \cdots a_k$, where $a_1 < a_2 < \cdots < a_k$.
\end{rem}

\subsection{Shellability and vertex decomposability}\label{sec:shell}

We next recall some combinatorial and topological concepts related to simplicial complexes. Recall that a simplicial complex $\Delta$ is \emph{shellable} if there exists an ordering of its facets $F_1, F_2, \dots, F_m$ such that for any $i = 2, \dots m$ the intersection of $F_i$ with the complex $\langle F_1, \dots, F_{i-1} \rangle$ generated by the previous facets is pure of dimension $\dim F_i - 1$.  We will typically work in the pure setting, where $\Delta$ is $(d-1)$-dimensional, and the intersection is $(d-2)$-dimensional.  We refer to the addition of each $F_i$ in a shelling as a \emph{shelling move}.

Provan and Billera \cite{PB80} introduced a related class of complexes. Suppose $\Delta$ is a simplicial complex on vertex set $V$. Given a face $\sigma \in \Delta$ we define its \emph{deletion} $\del_\Delta(\sigma)$ and \emph{link} $\lk_\Delta(\sigma)$ as follows:
\begin{align*}
    \del_\Delta(\sigma) &= \set{ \tau \in \Delta ~:~ \sigma \not \subseteq \tau} \\
    \lk_\Delta(\sigma)  &= \set{ \tau \in \Delta ~:~ \tau \cup \sigma \in \Delta, \;  \tau \cap \sigma = \varnothing}
\end{align*}
A vertex $v \in V$ is a \emph{shedding vertex} if every facet of the deletion $\del_\Delta(v)$ is a facet of $\Delta$.
A simplicial complex $\Delta$ is \emph{vertex decomposable} if $\Delta$ is either a simplex or else admits a shedding vertex $v$ such that both $\lk_\Delta(v)$ and $\del_\Delta(v)$ are vertex decomposable. It is known \cite[Corollary 2.9]{PB80} that if $\Delta$ is vertex decomposable then $\Delta$ is also shellable.

\subsection{Some commutative algebra}\label{sec:commalg}

Many of the above constructions can be interpreted in the context of commutative algebra. Indeed, the motivation for our definitions comes from open questions in this setting. We review the basic ideas here and refer to \cite{CombCommAlg} for more details.

If $\Delta$ is a simplicial complex on vertex set $V = [n]$, we fix a field ${\mathbb K}$ and let $S = {\mathbb K}[x_1, \dots, x_n]$ denote the polynomial ring with indeterminates indexed by the vertices. The \emph{Stanley--Reisner ideal} $I_\Delta$ is the ideal in $S$ generated by monomials corresponding to nonfaces of~$\Delta$:
\[I_\Delta = \langle x_{i_1}x_{i_2} \cdots x_{i_k} : i_1i_2 \cdots i_k \notin \Delta \rangle. \]
The \emph{Stanley--Reisner ring} ${\mathbb K}[\Delta]$ is then defined as the quotient ring ${\mathbb K}[\Delta] = S/I_\Delta$.

On the other hand, if ${\mathcal C}$ is a clutter on vertex set $[n]$, we defined the \emph{edge ideal} (or sometimes \emph{circuit ideal}) $I({\C})$ to be the monomial ideal generated by the circuits of $\C$:
\[I({\C}) = \langle x_{i_1}x_{i_2} \cdots x_{i_k} : i_1i_2 \cdots i_k \in E(\C) \rangle.\]
Using the language introduced above, one can see \cite{Russ09} that for any clutter $\C$ we have $I(\C) = I_{\ind(\C)}$ and for any simplicial complex $\Delta$ we have $I_\Delta = I(\C(\Delta))$.

Suppose $I$ is any ideal in the polynomial ring $S$. We say that $I$ has a \emph{linear resolution} if the $S$-module $S/I$ has a minimal resolution where all nonzero entries in some choice of matrices for the differential maps have linear entries. If $I$ is generated in fixed degree $d$, this property also has an interpretation in terms of the Betti numbers of $I$, see \cite{CombCommAlg}. 

If $G$ is a graph (a $2$-clutter) on vertex set $V = [n]$, then $I(G)$ is a squarefree quadratic monomial ideal. A well-known result of Fr\"oberg \cite{Froberg} says that $I(G)$ has a linear resolution if and only if $\overline G$ is a chordal graph. Recall that $\overline G$ denotes the \emph{complement} $2$-clutter, with circuits (edges) given by all $2$-subsets of $V$ that are not circuits of $G$. 

A useful result of Eagon and Reiner \cite{EagonReiner} tells us that $I$ has a linear resolution if and only if its Alexander dual $I^\vee$ (see, e.g., \cite[Chapter~5]{CombCommAlg}) is Cohen--Macaulay over ${\mathbb K}$.  
For the case where $I = I_\Delta$ is the Stanley--Reisner ideal of a simplicial complex $\Delta$, this follows (for any field ${\mathbb K}$) if $\Delta$ is shellable.
Recall that if $\Delta$ is a simplicial complex on vertex set $V = [n]$, the Alexander dual $\Delta^\vee$ is the simplicial complex whose faces are complements of nonfaces of $\Delta$, so that
\[F \in \Delta^\vee \Leftrightarrow [n] \setminus F \notin \Delta.\]
If $I = I_\Delta$ is a Stanley--Reisner ideal, one can recover the Alexander dual ideal as $I^\vee = I_{\Delta^\vee}$,
the Stanley--Reisner ideal of this dual complex.

 In the case where $I = I(\C)$ is the circuit ideal of a $k$-clutter $\C$, the dual complex has a straightforward construction that we record as a definition.

\begin{defn}\label{defn:dual}
Suppose $\C$ is a $k$-clutter on vertex set $V = [n]$. Define the \emph{dual complex} of $\C$, denoted $\dual(\C)$, to be the simplicial complex on $V$, whose facets are given by the sets 
\[\{[n] \setminus e: e \notin E(\C)\}.\]
\end{defn}

Hence $\dual(\C)$ is a pure $(n-k-1)$-dimensional simplicial complex, and the facets of $\dual(\C)$ are complements of circuits of $\overline{\C}.$ Furthermore, its Stanley--Reisner ideal $I_{\dual(\C)}$ is the Alexander dual of the circuit ideal $I(\overline{\C})$. 
We refer to Example~\ref{ex:dual} for an illustration. 

\begin{example} \label{ex:dual}
Suppose that $\C$ is the $2$-clutter (graph) on vertex set $V = [5]$ with edge set $E(\C)=\{12, 13, 23, 34, 45\}$. We see that $\C$ is a chordal graph and that its complement $\overline{\C}$ has edge set $E(\overline{\C})=\set{14,15,24,25,35}$. This implies that the circuit ideal of the complement  $I(\overline{\C}) = \langle x_1x_4, x_1x_5, x_2x_4, x_2x_5, x_3x_5 \rangle$ has a linear resolution. The dual complex $\dual(\C)$ is the pure $2$-dimensional simplicial complex on $[5]$ with facets given by $\{124, 134, 135, 234, 235 \}.$  One can also check that in this case $\dual(\C)$ is shellable, using the given lexicographic order on the facets.
\end{example}

\section{Underclosed and cointerval clutters}\label{sec:underclosedcoint}

In this section, we consider cointerval and underclosed clutters and discover a close connection between these classes.

We now recall the definition of cointerval $k$-clutters, first studied in \cite{DocEng} (where they were called $k$-hypergraphs). In what follows, if $\C$ is a $k$-clutter on vertex set $V \subseteq {\mathbb Z}$, the \emph{$i$-layer} of ${\mathcal C}$ is the $(k-1)$-clutter $\C^i$ on vertex set $V \setminus \{1, 2, \dots, i\}$ with circuits given by
\[E(\C^i)= \{a_2a_3 \cdots a_k : i a_2 a_3 \cdots a_k \in E({\mathcal C})\}.\]
\noindent
 Recalling our convention that a circuit $E = ia_2a_3 \cdots a_k$ is written so that $i < a_2 < a_3 < \cdots < a_k$, the $i$-layer of $\C$ is the collection of all circuits that contain $i$ as the lowest element, with the element $i$ removed.

\begin{defn}[Cointerval clutter]\label{defn:coint}  \cite[Definition 4.1]{DocEng}
The class of \emph{cointerval} (uniform) $k$-clutters on vertex set $V \subseteq {\mathbb Z}$ is defined iteratively as follows. All $1$-clutters are cointerval, and for $k > 1$, a clutter ${\mathcal C}$ is cointerval if
\begin{enumerate}
\item
For every $i \in V$ the clutter $\C^i$ is cointerval;
\item
For all $i < j$, we have that $\C^j$ is a subclutter of $\C^i$. 
\end{enumerate}
\end{defn}

As shown in \cite{DocEng}, one can check that a $2$-clutter is cointerval if and only if it is the complement of an interval graph.

Another approach to generalizing interval graphs is proposed in \cite{BenSecVar}. Although the authors phrase their construction in terms of simplicial complexes, we use the language of clutters.

\begin{defn}[Underclosed clutter] \label{defn:underclosed}
Suppose ${\mathcal C}$ is a clutter with $n$ vertices. Then ${\mathcal C}$ is \emph{underclosed} if there exists a (re)ordering of its vertex set $V = \{1, 2, \dots, n\}$ such that if $e = a_1 a_2 \cdots a_k$ is a $k$-circuit of ${\mathcal C}$ then $a_1b_2b_3 \cdots b_k$ is also a $k$-circuit for any $b_2 \leq a_2, b_3 \leq a_3, \dots, b_k \leq a_k$. 
\end{defn}

Note that we do not require ${\mathcal C}$ to be a uniform clutter.
One can see that an interval graph $G$ (represented as a collection of $n$ intervals in ${\mathbb R}$) defines an underclosed $2$-clutter by ordering the intervals according to their left endpoint (see for instance \cite{Wood06}). From the definition, it is also clear that an underclosed clutter is a generalization of a \emph{shifted} set system \cite{Frankl87}. 
In \cite{Dinginterval}, Ding defined a clutter to be \emph{interval} if its vertices can be linearly ordered so that every circuit is a consecutive set of vertices. It is clear that an underclosed clutter is interval in this sense. 
See Section \ref{sec:further} for more discussion regarding connections to other classes of clutters.

\begin{lem}\label{lem:smallest}
If ${\mathcal C}$ is an underclosed clutter with respect to some ordering $\{1,2, \dots, n\}$ of the vertex set, then the vertex $1$ cannot be in circuits of different cardinalities.
\end{lem}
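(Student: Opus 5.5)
Suppose, for contradiction, that vertex $1$ lies in a $k$-circuit $e = 1a_2\cdots a_k$ and in an $\ell$-circuit $f = 1c_2\cdots c_\ell$ with $k < \ell$. The plan is to use the underclosed condition to push the larger circuit $f$ down onto a set that properly contains $e$ or its image, contradicting the clutter property. The key point is that, with the first entry fixed at $1$, the underclosed condition lets us replace any later entry by anything smaller. This remains valid so long as the result is still a strictly increasing sequence, i.e.\ a genuine $\ell$-set.

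The cleanest choice is to compress both circuits to initial segments. Applying Definition~\ref{defn:underclosed} to $e$ with $b_j = j$ for $j=2,\dots,k$ is legitimate, since $a_j \geq j$ because $1 < a_2 < \cdots < a_k$ are distinct integers. This shows that $\{1,2,\dots,k\}$ is a $k$-circuit. Similarly, applying it to $f$ with $b_j = j$ (again $c_j \ge j$) shows that $\{1,2,\dots,\ell\}$ is an $\ell$-circuit. Since $k<\ell$, we have $\{1,\dots,k\} \subsetneq \{1,\dots,\ell\}$, so one circuit is properly contained in another. This contradicts the fact that $\C$ is a clutter.

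The only step needing care is reading the definition correctly: the $b_j$ must be written in increasing order so that $a_1b_2\cdots b_k$ is again a $k$-subset with first element $a_1 = 1$. Choosing $b_j=j$ automatically satisfies $1<2<\cdots<k$ and $b_j\le a_j$, so I expect no real obstacle. One should also note that $e$ and $f$ need not be comparable to begin with; the compression to initial segments handles this uniformly.
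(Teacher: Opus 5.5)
Your proposal is correct and matches the paper's proof: both use the underclosed property to compress the two circuits containing $1$ to the initial segments $12\cdots k$ and $12\cdots \ell$, and then note that the smaller is properly contained in the larger, contradicting the clutter property. Your explicit check that $b_j = j \le a_j$ is admissible fills in a detail the paper leaves implicit.
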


\begin{proof}
Suppose $e_1$ and $e_2$ are circuits of ${\mathcal C}$. Assume $1 \in e_1$ and $1 \in e_2$, where $|e_1| = k_1$, $|e_2| = k_2$, and $k_1 < k_2$. Then the underclosed property implies that ${\mathcal C}$ contains circuit $12\cdots k_1$ as well as $12 \cdots k_2$, a contradiction to the clutter property.
\end{proof}

By the same argument as above, one can see that any vertex cannot be the minimum element in circuits of different sizes in an underclosed clutter.

Recall that for the case of $k=2$, by construction the class of cointerval $k$-clutters coincides with the complements of underclosed $k$-clutters. In this section, we show that this holds for all $k$. We begin with the following observation regarding cointerval clutters.

\begin{lem} \label{lem:coint}
Suppose ${\mathcal C}$ is a cointerval $k$-clutter, for $k \geq 1$. If $e = b_1 b_2 \cdots b_k \in E({\mathcal C}) $, then $a_1a_2 \cdots a_{k-1}b_k \in E({\mathcal C})$, for any $a_1 \leq b_1, a_2 \leq b_2, \dots, a_{k-1} \leq b_{k-1}$. 
\end{lem}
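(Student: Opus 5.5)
We will prove Lemma \ref{lem:coint} by induction on $k$, using the recursive structure of Definition \ref{defn:coint}. For $k=1$ there is nothing to prove: the only element of $e = b_1$ is the last one, so the claimed circuit is $e$ itself. For $k = 2$, if $b_1b_2 \in E(\C)$ and $a_1 \leq b_1$, then either $a_1 = b_1$ (trivial) or $a_1 < b_1$. In the latter case condition (2) gives $\C^{b_1} \subseteq \C^{a_1}$. Since $b_2 \in E(\C^{b_1})$, we get $b_2 \in E(\C^{a_1})$, that is, $a_1 b_2 \in E(\C)$. Note that $a_1 < b_2$, so this is a genuine $2$-subset written in increasing order.

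For the inductive step, assume the statement for cointerval $(k-1)$-clutters and let $e = b_1 b_2 \cdots b_k \in E(\C)$. Fix $a_1 \le b_1, \dots, a_{k-1} \le b_{k-1}$, implicitly with $a_1 < a_2 < \cdots < a_{k-1} < b_k$ so that the target is a $k$-set in increasing order. We change the coordinates in two stages.

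First we lower the tail $b_2, \dots, b_{k-1}$ inside a layer. Since $e \in E(\C)$, we have $b_2 \cdots b_k \in E(\C^{b_1})$. By condition (1), $\C^{b_1}$ is a cointerval $(k-1)$-clutter on $V \setminus \{1,\dots,b_1\}$. We would like to apply the inductive hypothesis there to replace $b_2, \dots, b_{k-1}$ by $a_2, \dots, a_{k-1}$. However, some $a_j$ with $j \geq 2$ may be $\le b_1$, and then it does not lie in the vertex set of $\C^{b_1}$. This is the main obstacle.

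To avoid it, we perform the two stages in the opposite order:
\begin{enumerate}
\item Lower the first coordinate: pass from $b_1$ to $a_1$ using condition (2). If $a_1 < b_1$, then $\C^{b_1} \subseteq \C^{a_1}$, so $b_2 \cdots b_k \in E(\C^{a_1})$, i.e.\ $a_1 b_2 \cdots b_k \in E(\C)$.
\item Lower the remaining coordinates inside $\C^{a_1}$: this layer is cointerval by condition (1), and its vertex set $V \setminus \{1, \dots, a_1\}$ contains all of $a_2, \dots, a_{k-1}, b_k$, since each exceeds $a_1$. Also $a_j \le b_j$ for $2 \le j \le k-1$. The inductive hypothesis applied to the circuit $b_2 \cdots b_k$ of $\C^{a_1}$ therefore gives $a_2 \cdots a_{k-1} b_k \in E(\C^{a_1})$.
\end{enumerate}
By the definition of the layer, step (2) means $a_1 a_2 \cdots a_{k-1} b_k \in E(\C)$, as required.

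The only delicate point is bookkeeping: the layers are defined on truncated vertex sets, so each application of (1) or (2) must be made on a layer whose ground set contains every vertex in use. Choosing the order ``first coordinate first, then recurse'' guarantees this, because all later coordinates exceed $a_1$.
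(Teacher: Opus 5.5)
Your proof is correct and follows the paper's argument: first pass from $b_1$ to $a_1$ using condition (2), so that $b_2\cdots b_k \in E(\C^{a_1})$, and then apply the inductive hypothesis inside the cointerval $(k-1)$-clutter $\C^{a_1}$. Your separate treatment of $k=2$ is harmless but redundant, since the general inductive step with base case $k=1$ already covers it.
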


\begin{proof}
Note that if $k = 1$ then the condition is clearly satisfied, so we assume $k \geq 2$.

Suppose ${\mathcal C}$ is a $k$-clutter containing the circuit $e = b_1 b_2 \cdots b_k$. Note if $j_1 < b_1$ then $j_1b_2 \cdots b_k$ is in $E({\mathcal C})$ by condition $(2)$ of Definition \ref{defn:coint}. 

Hence $e^1 = b_2 \dots b_k$ is a circuit of $\C^{j_1}$, the $j_1$-layer of $\C$. Recall that $\C^{j_1}$ is a cointerval $(k-1)$-clutter by condition $(1)$ of Definition \ref{defn:coint}.  By induction on $k$ we have that $j_2 \cdots j_{k-1}b_k \in E(\C^{j_1})$. Hence we have $j_1j_2 \cdots j_{k-1}b_k \in E(\C)$, as desired.
\end{proof}

In what follows, recall that if $\C$ is a uniform $k$-clutter on vertex set $V$, its {complement} $\overline{\C}$ is the $k$-clutter whose circuits are all $k$-subsets of $V$ that do not belong to $\C$. Further, if $1<2<\dots<n-1<n$ is an order, we say that its \emph{reverse order} is $n <_{r} n-1 <_{r} \dots <_{r} 2 <_{r} 1.$

\begin{thm}\label{thm:complement}
A uniform clutter ${\mathcal C}$ on vertex set $V \subseteq {\mathbb Z}$ is cointerval if and only if its complement $\overline{\C}$ is underclosed with respect to the reverse ordering.
\end{thm}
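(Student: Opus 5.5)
The plan is to restate ``$\overline{\C}$ is underclosed with respect to the reverse ordering'' as a downward shifting property of $\C$ itself. We then show that this property is equivalent to being cointerval: one direction is Lemma~\ref{lem:coint}, and the other is an induction on $k$.

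\textbf{Step 1 (translation).} In the reverse order, a $k$-subset is written as $a_1 >a_2>\cdots>a_k$ in the usual order, so its reverse-minimum $a_1$ is its usual maximum. The underclosed condition for $\overline{\C}$ then says the following. If $a_1>a_2>\cdots>a_k$ is a circuit of $\overline{\C}$, then so is every $k$-set $a_1>b_2>\cdots>b_k$ with $b_j\ge a_j$ for all $j$; note that $b_2<a_1$ is forced because $a_1$ must remain the reverse-minimum. Since $\C$ consists exactly of the $k$-sets not in $\overline{\C}$, we take the contrapositive, which gives:

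\emph{Property (P):} if $c_1<\cdots<c_{k-1}<m$ is a circuit of $\C$, then $d_1\cdots d_{k-1}m\in E(\C)$ for all $d_1<\cdots<d_{k-1}<m$ with $d_j\le c_j$.

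So the theorem reduces to showing that a uniform $k$-clutter $\C$ is cointerval if and only if it has (P). This step needs care with indexing and the sorted-order convention, but it is otherwise routine. The case $k=1$ is trivial on both sides.

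\textbf{Step 2 (cointerval $\Rightarrow$ (P)).} This is exactly Lemma~\ref{lem:coint}, with $b_k=m$ fixed and the other entries lowered componentwise.

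\textbf{Step 3 ((P) $\Rightarrow$ cointerval).} We induct on $k$, with $k=1$ as the base case, and check the two conditions of Definition~\ref{defn:coint}.

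For condition (2), let $i<j$ and $a_2\cdots a_k\in E(\C^j)$. Then $ja_2\cdots a_k\in E(\C)$, and $ia_2\cdots a_k$ is componentwise lower with the same maximum $a_k$, since $k\ge2$. By (P) it lies in $E(\C)$, so $a_2\cdots a_k\in E(\C^i)$.

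For condition (1), it suffices by induction to show that $\C^i$, viewed as a $(k-1)$-clutter on $V\setminus\{1,\dots,i\}$, has property (P). Take $a_2\cdots a_k\in E(\C^i)$ and a componentwise lower sequence $b_2<\cdots<b_{k-1}<a_k$ with all $b_j>i$. Then $ib_2\cdots b_{k-1}a_k$ is componentwise below $ia_2\cdots a_k\in E(\C)$ and has the same maximum, so it lies in $E(\C)$ by (P). Hence $b_2\cdots b_{k-1}a_k\in E(\C^i)$.

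The main obstacle is expected to be Step 1: getting the reverse-order bookkeeping right, in particular that the reverse-minimum is fixed and that componentwise comparison in the reversed sorted order becomes ``lowering the non-maximal entries'' in the usual order. Once (P) is established, Steps 2 and 3 are short.
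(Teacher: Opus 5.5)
Your proposal is correct and follows essentially the same route as the paper. The forward direction is Lemma~\ref{lem:coint} read contrapositively, and the converse is an induction on $k$ that checks both conditions of Definition~\ref{defn:coint}. The only difference is presentational: you translate ``$\overline{\C}$ is underclosed in the reverse order'' once into the shifting property (P) of $\C$ and run the induction on (P), whereas the paper passes back and forth between $\C^i$ and $\overline{\C^i}$ in the reverse order at each step.
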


\begin{proof}
First we assume that ${\mathcal C}$ is a cointerval $k$-clutter.
Suppose we have $e = a_1a_2 \cdots a_k \in \overline{\C}$, where $a_1 <_{r} a_2 <_{r} \cdots <_{r} a_k$. Note that we are using the reverse order when describing circuits in $\overline{\C}$ (so that in fact $a_k < a_{k-1} < \cdots < a_1$).

Let $f=a_1 b_2 \dots b_k$ where $b_i \le_{r} a_i$ for all $i \in \{2, \dots, k\}$. We wish to show that $f \in \overline{C}.$ If not, we would have $f \in \C$, i.e., $b_k \dots b_2 a_1 \in \C.$ By Lemma~\ref{lem:coint}, this shows that $a_k \dots a_2 a_1 \in \C$, which is a contradiction. We conclude that $\overline{\mathcal C}$ is underclosed.

Now suppose $\C$ is a $k$-clutter such that its complement $\overline{\C}$ is underclosed on $V = \{n, n-1, \dots, 1\}$ under the reverse ordering $n <_{r} n-1 <_{r} \dots <_{r} 1$ as described above. We claim that $\C$ is cointerval under the usual ordering of $V$. We prove this by induction on $k$.  

Note that for $k=1$, all $1$-clutters (i.e. singleton subsets of $V$) are both underclosed and cointerval regardless of the underlying ordering of $V$.

 For $k > 1$, we start by checking the first cointerval condition, namely that the $i$-layer of $\C$ is cointerval (under the usual order). By induction it is enough to show that $\overline{\C^i}$, the complement of the $i$-layer, is underclosed.

 Note that $\overline{\C^i}$ consists of all $(k-1)$-subsets of the form $b_1 b_2 \cdots b_{k-1}$  such that $b_1 b_2 \cdots b_{k-1}$ is not a circuit of $\C^i$. In particular $i b_1 b_2 \cdots b_{k-1}$ is not a circuit of $\C$, so that $b_{k-1} b_{k-2} \cdots b_1 i$ (in the reverse order) is a circuit of $\overline{\C}$.
 See Example \ref{ex:undercoint} for an illustration.

Now suppose $b_{k-1} b_{k-2} \cdots b_1$ is in $\overline{\C^i}$, and let $b_{k-1} <_{r} j_{k-2} <_{r} j_{k-3} <_{r} \cdots <_{r} j_1$ with 
$j_{\ell} \leq_{r} b_\ell$ for all $\ell \in \{1, \dots, k-2\}$.
We then have that $b_{k-1} j_{k-2} j_{k-3} \cdots j_1 i$ is a circuit of $\overline{\C}$, by the underclosed property of $\overline{\C}$. Hence $b_{k-1} j_{k-2} j_{k-3} \cdots j_1$ is in  $\overline{\C^i}$. We conclude that $\overline{\C^i}$ is underclosed, and hence by induction the $i$-layer is cointerval.

For the second condition, suppose $i < j$. We must show that $\C^j$, the $j$-layer of $\C$, is a subclutter of $\C^i$, the $i$-layer in $\C$. Suppose $b_1 b_2 \cdots b_{k-1}$ is $\C^j$, so that $j b_1 b_2 \cdots b_{k-1}$ is a circuit of $\C$. We claim that $i b_1 b_2 \cdots b_{k-1}$ is a circuit of $\C$. If not, then (after reversing order), we would have $b_{k-1} b_{k-2} \cdots b_1 i$ is a circuit of $\overline{\C}$. But in this ordering we have $j <_{r} i$, and hence by the underclosed property we have $b_{k-1} b_{k-2} \cdots b_1 j \in \overline{\C}$, a contradiction.
\end{proof}

 \begin{example}\label{ex:undercoint} 
Let $\C$ be a $4$-clutter on $[6]$ with the standard order on $[6]$ and circuits
\begin{equation*}
E( \C) = \set{1234, 1235, 1236, 1246, 1256, 1346, 1356, 1456}.
\end{equation*} 
Its complement $\overline{\C}$ is a $4$-clutter on $[6]$ with circuits 
\begin{equation*}
    E(\overline{\C}) = \set{6543, 6542, 6532, 6432, 5432, 5431, 5421}, 
\end{equation*}  
which is underclosed under the reverse ordering on $[6]$: $6<_{r}5<_{r}4<_{r}3<_{r}2<_{r}1$. 

The $1$-layer of $\C$ is
\begin{equation*}
1\text{-layer} = \set{234, 235, 236, 246, 256, 346, 356, 456}
\end{equation*}
and the $i$-layer is empty for $i=2,\dots,6.$ Notice that the complement of the $1$-layer above is $\set{543,542}$, which is underclosed on the order $6<_{r}5<_{r}4<_{r}3<_{r}2.$ To verify that the $1$-layer is cointerval, we consider its layers:
\begin{align*}
    2\text{-layer} = \set{34, 35, 36, 46, 56}, \; \; 3\text{-layer} = \set{46, 56}, \; \; 4\text{-layer} = \set{56}
\end{align*}
Note that the $5$- and $6$-layers are empty. One can see that above $i$-layers are themselves cointerval, and we see that $4\text{-layer} \subseteq 3\text{-layer} \subseteq 2\text{-layer}$ in the $1\text{-layer}$ of $\C$. Thus $\C$ with this ordering is cointerval.

\end{example}

\section{Chordal clutters}\label{sec:chordal}

We next compare our clutters with a notion of chordality introduced by Woodroofe in \cite{Russ09}. We begin with some relevant definitions.

Given a clutter ${\mathcal C}$ and a vertex $v \in V({\mathcal C})$, there are two new clutters that we can form that will be relevant for our study. The \emph{deletion} ${\mathcal C} \backslash v$ is the clutter on vertex set $V({\mathcal C}) \setminus v$ with circuits 
\[\{e : e \in E({\mathcal C}), v \notin e\}.\] 
Notice that ${\mathcal C} \backslash v$ simply removes all circuits that contain $v$. The \emph{contraction} ${\mathcal C}/v$ is the clutter on vertex set $V({\mathcal C}) \backslash v$ with circuits given by the \emph{minimal} sets of
\[\{e \setminus \{v\}: e \in E({\mathcal C})\}.\]
In other words, ${\mathcal C}/v$ removes $v$ from every circuit that contains $v$ \emph{and} then removes any circuits that properly contain others to preserve the clutter property. 

Note that deletion in the context of clutters differs from the deletion of a face of a simplicial complex.  For example, if $\Delta$ is a simplicial complex with a single facet (i.e. $\Delta$ is a simplex), then deleting any vertex in this facet would yield a simplicial complex with a single facet of dimension one lower. However, if $\C$ is a clutter with a single circuit, then deleting any of the vertices in this circuit would yield a clutter with no circuits.

One can check \cite{Russ09} that if $v \neq w$ are vertices of ${\mathcal C}$ then we have the equalities:
\[({\mathcal C} \backslash v)\backslash w = ({\mathcal C} \backslash w)\backslash v, \; \;  ({\mathcal C} / v) / w = ({\mathcal C} / w) / v, \; \; ({\mathcal C} \backslash v) / w = ({\mathcal C} / w) \backslash v.\]

\begin{defn}[Clutter minor] \label{defn: minor}
Suppose ${\mathcal C}$ is a clutter. Any clutter ${\mathcal D}$ obtained from ${\mathcal C}$ by a sequence of deletions and contractions is called a \emph{minor} of ${\mathcal C}$.
\end{defn}

Recall that a vertex $v$ of a graph $G$ (a $2$-clutter) is simplicial if the neighborhood of $v$ is a complete subgraph.  We have the following definition for arbitrary clutters.

\begin{defn}[Simplicial  vertex] \label{defn:simplicial}
    Let $\mathcal{C}$ be a clutter. A vertex $v$ of $\mathcal{C}$ is \emph{simplicial} if for every two circuits $e_1$ and $e_2$ of $\mathcal{C}$ that contain $v$, there is a third circuit $e_3$ such that $e_3 \subseteq (e_1 \cup e_2)\setminus \{v\}$.
\end{defn}

\begin{example}\label{ex:contraction}
    Let $\C$ be the clutter on vertex set $[7]$, with circuits 
    $$E(\C) = \set{1234,1235,1236,2345,2346,2347,3456,4567}.$$
    Notice that $\C$ is underclosed with the given vertex order and that  vertex $7$ is simplicial in $\C$. In particular we have $7$ contained (only) in the circuits $e_1 = 2347$ and $e_2 = 4567$, and we see that $e_3 = 2346 \subseteq (e_1 \cup e_2) \setminus \{7\}$. 
    
    Also observe that the deletion $\C \backslash 7$ has circuits
    $$
    E(\C \backslash 7) = \set{1234,1235,1236,2345,2346,3456}
    $$
    and the contraction $\C / 7$ has circuits
    $$
    E({\mathcal C}/7) = \set{1235,1236,234,456}.
    $$

    We immediately see that $\C \backslash 7$ is underclosed under the given order, but one can check that $\C / 7$ is not underclosed under any order on its vertices. To see this, we can use the fact that the vertex labeled $1$ in an underclosed order cannot be in circuits of different sizes by Lemma~\ref{lem:smallest}.
\end{example}

The class of \emph{chordal} graphs can be characterized as those graphs with the property that every induced subgraph has a simplicial vertex. Inspired by this notion, in \cite{Russ09} Woodroofe introduced the following generalization.

\begin{defn}\cite[Definition 4.3]{Russ09}\label{defn:chordal}
A clutter ${\mathcal C}$ is \emph{chordal} if every minor of ${\mathcal C}$ has a simplicial vertex.
\end{defn}

For example, if $M$ is any matroid, the collection ${\mathcal C} = \text{Cir}(M)$ of circuits of $M$ forms a (typically non-uniform) chordal clutter \cite[Example~4.4]{Russ09}.

We next wish to show that an underclosed clutter is chordal in this sense. Unfortunately, it is not true that any minor of an underclosed clutter is again underclosed, as we see in Example \ref{ex:contraction}. 
We instead show that all minors admit a simplicial vertex.

 Note that if we have an ordering of the vertex set $V$ of a clutter ${\mathcal C}$, then deleting a vertex $v$ induces an ordering on $V \setminus v$ which we call the \emph{compressed labeling} as in \cite{BenSecVar}. Generalizing \cite[Lemma 58]{BenSecVar} to non-uniform clutters we have the following.

\begin{lem}
    If ${\mathcal C}$ is an underclosed clutter, any deletion of ${\mathcal C}$ is also underclosed under the compressed labeling. 
\end{lem}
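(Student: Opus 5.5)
It suffices to treat the deletion of a single vertex, because any deletion of $\C$ is obtained by deleting vertices one at a time, and the compressed labeling of a repeated deletion is the compressed labeling of the compressed labeling. So fix an underclosed ordering $1,2,\dots,n$ of $V$ and a vertex $v$. The compressed labeling on $V\setminus v$ is the restriction of the order on $V$ with $v$ removed. Since underclosedness depends only on the relative order of vertices, we may test it directly with the restricted order; relabeling $w \mapsto w$ for $w<v$ and $w\mapsto w-1$ for $w>v$ plays no role.

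Now take a circuit $e=a_1a_2\cdots a_k$ of $\C\backslash v$, so $e\in E(\C)$ and $v\notin e$. Take vertices $b_2,\dots,b_k \in V\setminus v$ with $b_\ell\le a_\ell$ and $a_1<b_2<\cdots<b_k$, which is how Definition~\ref{defn:underclosed} is read. The underclosed property of $\C$ gives $f=a_1b_2\cdots b_k\in E(\C)$. Since none of $a_1,b_2,\dots,b_k$ equals $v$, we have $v\notin f$. Hence $f\in E(\C\backslash v)$ by the definition of deletion, which only removes circuits containing $v$. So $\C\backslash v$ is underclosed under the compressed labeling. No cardinality issues arise for non-uniform clutters: $f$ has the same size as $e$, and $\C\backslash v$ is still a clutter because it is a subcollection of $\C$.

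There is essentially no obstacle here. The only point needing care is bookkeeping: the comparisons $b_\ell\le a_\ell$ in the compressed labeling must agree with those in the original labeling. They do, because the compressed labeling is order-preserving on $V\setminus v$. The induction over several deletions then follows immediately from the single-vertex case.
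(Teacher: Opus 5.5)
Your proof is correct and matches the paper's argument: apply the underclosed property of $\C$ to a circuit of $\C\backslash v$ to get a circuit whose vertices all differ from $v$, so it survives the deletion. Your added remarks, that repeated deletions reduce to the single-vertex case and that the compressed labeling is order-preserving, make explicit what the paper leaves implicit.
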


\begin{proof}
Suppose $\C$ is underclosed under a given ordering of the vertex set $V$, and let $v$ be any vertex. Then ${\mathcal C} \backslash v$ has circuits $\{e : e \in E({\mathcal C}), v \notin e\}$. Consider $f=a_0 a_1 \dots a_k$ a circuit of ${\mathcal C} \backslash v$. We need to show that $g=a_0b_1\dots b_k $ with $ b_i \neq v$ and $b_i\leq a_i$ for $i\in \{1,\dots , k\}$ is also a circuit in ${\mathcal C} \backslash v$.

   Since $v\notin g$, it suffices to show that $g\in E({\mathcal C})$. But we have $f\in E({\mathcal C})$, which allows us to use the underclosed property ${\mathcal C}$ applied to $f$. Hence $g\in E({\mathcal C})$ as desired.
\end{proof}

In particular, deleting vertices preserves the underclosed property.  
However, Example~\ref{ex:contraction} shows that contracting a vertex of an underclosed clutter can possibly take us out of this class.

In what follows, whenever ${\mathcal C}$ is a minor of a clutter ${\mathcal D}$, we assume that all vertex deletions are performed first, followed by vertex contractions. This is justified by the remarks before Definition \ref{defn: minor}, since any order of operations yields the same clutter.

\begin{lem}\label{lem:contract}
Suppose ${\mathcal C}$ is a minor of a clutter ${\mathcal D}$, where $U = \{v_1, v_2, \dots, v_k\}$ are the contracted vertices. Then for any circuit $e \in E({\mathcal C})$, there exists a circuit $f \in E({\mathcal D})$ satisfying $e \subseteq f$ and $f \setminus e \subseteq U$.
\end{lem}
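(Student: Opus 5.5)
Since the deletions are performed first, write $\C = (\D \backslash W)/v_1/v_2\cdots/v_k$, where $W$ is the set of deleted vertices. The plan is to induct on $k$, the number of contracted vertices, treating the deletion step as the base.

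\textbf{Base case $k=0$.} Here $\C = \D\backslash W$. By definition of deletion, every circuit of $\C$ is already a circuit of $\D$ (one not meeting $W$). So we take $f=e$, and then $f\setminus e=\varnothing\subseteq U$.

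\textbf{Inductive step.} Suppose $k\ge 1$. Let $\C' = (\D\backslash W)/v_1\cdots/v_{k-1}$, so that $\C = \C'/v_k$. By the remarks before Definition~\ref{defn: minor}, the order of the contractions does not matter, so this decomposition is legitimate. Let $e\in E(\C)$. By the definition of contraction, $e$ is a minimal element of $\{g\setminus\{v_k\} : g\in E(\C')\}$. Hence there is a circuit $g\in E(\C')$ with $g\setminus\{v_k\}=e$, which means either $g=e$ or $g=e\cup\{v_k\}$. In both cases $e\subseteq g$ and $g\setminus e\subseteq\{v_k\}$.

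Now apply the induction hypothesis to $g$, viewed as a circuit of the minor $\C'$ of $\D$ with contracted set $\{v_1,\dots,v_{k-1}\}$. This gives $f\in E(\D)$ with $g\subseteq f$ and $f\setminus g\subseteq\{v_1,\dots,v_{k-1}\}$. Then $e\subseteq g\subseteq f$, and
\[
f\setminus e \subseteq (f\setminus g)\cup(g\setminus e)\subseteq U .
\]
This completes the induction.

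The only point needing care is that contraction takes minimal sets. This is harmless, because minimality only removes candidates: every surviving circuit of $\C'/v_k$ still has the form $g\setminus\{v_k\}$ for some actual circuit $g$ of $\C'$. Similarly, deletion only removes circuits, so it cannot create new ones. No step is a genuine obstacle; the main thing is to set up the induction so that the ordering convention (deletions first, then contractions) is respected.
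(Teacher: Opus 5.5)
Your proof is correct and follows essentially the same route as the paper: induction on the number of contracted vertices, peeling off the last contraction to get a circuit $g$ equal to $e$ or $e\cup\{v_k\}$ in the previous minor, then applying the inductive hypothesis. The only cosmetic difference is that the paper first disposes of deletions by replacing $\D$ with $\D\backslash W$ (whose circuits are circuits of $\D$), whereas you absorb the deletions into the base case $k=0$.
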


\begin{proof}
    First observe that if $\C$ is obtained from ${\mathcal D}$ by deleting a single vertex, then any circuit $e \in E(\C)$ is also a circuit in ${\mathcal D}$. Hence we can assume that  ${\mathcal C}$ has been obtained from ${\mathcal D}$ by only performing vertex contractions.
    
    Next observe that for any clutter ${\mathcal B}$, the circuits of the contraction ${\mathcal B} / v$ are of the form $e \setminus \set{v}$ for some $e \in E({\mathcal B})$. Hence the result follows by induction on $k$, the number of vertices that are contracted. Indeed, if $k=0$ the claim is clear, and for $k \geq 1$ we note that ${\mathcal C}$ is obtained by contracting the vertex $v_k$ in ${\mathcal C}^\prime = {\mathcal D}/\{v_1, \dots, v_{k-1}\}$. Hence there exists a circuit $e^\prime \in {\mathcal C}^\prime$ with $e \subseteq e^\prime$ and $e^\prime \setminus e \subseteq \{v_k\}$ (note that we could have $e^\prime = e$). By induction we have a circuit $f \in {\mathcal D}$ such that $e^\prime \subseteq f$ and $f \setminus e^\prime \subseteq \{v_1, v_2, \dots, v_{k-1}\}$. The claim follows.
\end{proof}

Although contractions of underclosed clutters are no longer underclosed in general, we are able to find a simplicial vertex in any minor.

\begin{lem} \label{lem:underclosedchordal}
Suppose $\mathcal {C}$ is a clutter obtained as a minor from a clutter ${\mathcal D}$, where ${\D}$ is underclosed according to some ordering $\{1,2, \dots, n\}$ of the vertex set. Let $m$ be the largest vertex of $\mathcal {C}$ under this order. Then $m$ is a simplicial vertex of $\mathcal {C}$. 
\end{lem}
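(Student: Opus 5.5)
Let $\mathcal D'$ denote the clutter obtained from $\D$ by performing all the deletions, so that $\C = \mathcal D'/U$ with $U=\{v_1,\dots,v_k\}$ the contracted vertices, and let $W$ be the set of deleted vertices. Two facts frame the plan. First, $\mathcal D'$ is underclosed by the preceding lemma on deletions. We will in fact apply the underclosed property directly in $\D$ and only check that the resulting circuit avoids $W$. Second, iterating the description of contraction shows that every set $g\setminus U$ with $g\in E(\mathcal D')$ contains a circuit of $\C$. So to prove that $m$ is simplicial, it suffices to do the following: given distinct circuits $e_1,e_2\in E(\C)$ containing $m$, produce a circuit $g\in E(\D)$ with $g\cap W=\varnothing$, $m\notin g$, and $g\setminus U\subseteq e_1\cup e_2$.

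\textbf{Step 1: lift the circuits.} By Lemma~\ref{lem:contract}, choose $f_1,f_2\in E(\mathcal D')\subseteq E(\D)$ with $e_i\subseteq f_i$ and $f_i\setminus e_i\subseteq U$. Since $U$ is disjoint from $V(\C)$, we get $e_i=f_i\cap V(\C)$, and also $f_i\cap W=\varnothing$. Every vertex of $\C$ is at most $m$, so any element of $f_i$ larger than $m$ lies in $U$. Relabel so that $\min f_1\le \min f_2$, and write $f_1=a_1a_2\cdots a_r$ in increasing order.

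\textbf{Step 2: find a replacement vertex for $m$.} Since $e_1\neq e_2$ we have $f_1\neq f_2$, and by the clutter property $f_2\not\subseteq f_1$. Let $x$ be the smallest element of $f_2\setminus f_1$. The main point, and the step I expect to be the obstacle, is to show that $x<m$.

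Suppose instead that $x>m$ (note $x\ne m$, since $m\in f_1$). Then every element of $f_2$ that is at most $m$ lies in $f_1$. Since $V(\C)\subseteq\{1,\dots,m\}$, this gives
\[e_2=f_2\cap V(\C)\subseteq f_1\cap V(\C)=e_1.\]
This contradicts the fact that $\C$ is a clutter with $e_1\neq e_2$. Hence $x<m$. Moreover $x\ge\min f_2\ge a_1$, and $x\ne a_1$ because $a_1\in f_1$, so $x>a_1$.

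\textbf{Step 3: apply the underclosed property.} Set $g=(f_1\setminus\{m\})\cup\{x\}$. Replacing one element of an increasing sequence by a smaller element that is not already present produces, after sorting, a sequence that is componentwise $\le$ the original. Since $x>a_1$, this sequence still begins with $a_1$. Therefore the underclosed property of $\D$ applied to $f_1$ gives $g\in E(\D)$.

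Now verify the required properties of $g$. It avoids $W$, since $f_1$ and $f_2$ do. It does not contain $m$. Finally, $g\setminus U\subseteq (f_1\cup f_2)\setminus U=e_1\cup e_2$. So $g\in E(\mathcal D')$, and $g\setminus U$ contains a circuit $e_3$ of $\C$ with $e_3\subseteq (e_1\cup e_2)\setminus\{m\}$. This shows that $m$ is simplicial. The case where $m$ lies in fewer than two circuits is vacuous.
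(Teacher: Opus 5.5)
Your proof is correct and follows essentially the same route as the paper: lift $e_1$ to a circuit $f_1$ via Lemma~\ref{lem:contract}, use underclosedness to replace $m$ in $f_1$ by a smaller vertex not in $f_1$, and then remove the contracted vertices to find $e_3\subseteq(e_1\cup e_2)\setminus\{m\}$. The only real difference is that the paper assumes $\min(e_1)\le\min(e_2)$ and takes the replacement vertex directly from $e_2\setminus e_1\subseteq V(\C)$, so $v<m$ is automatic and there is no need to lift $e_2$ or run your Step~2 contradiction; your explicit checks that $g$ avoids the deleted vertices and that $g\setminus U$ contains a circuit of $\C$ fill in points the paper leaves implicit.
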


\begin{proof}
Suppose $e_1$ and $e_2$ are distinct circuits of $\C$ containing $m$.
 We want to show that there is a circuit of $\C$ contained in $(e_1 \cup e_2) \setminus m$. Without loss of generality assume $\min(e_1) \leq \min(e_2)$, where $\min(e)$ denotes the smallest element of the set $e \subseteq [n]$.

Suppose ${\mathcal C}$ has been obtained from ${\mathcal D}$ by contracting vertices $U = \{v_1, v_2, \dots, v_k\}$.
From Lemma \ref{lem:contract} let $f_1$ be a circuit of ${\mathcal D}$ with $e_1 \subseteq f_1$ and $f_1 \setminus e_1 \subseteq U$.
Let $v \in e_2 \setminus e_1.$ Observe that  $v \in e_2 \setminus f_1$. 
Since $\D$ is underclosed and
\[\min(f_1) \le \min(e_1) < v < m,\]
we know that ${f_1}' := (f_1 \setminus m) \cup v$ is a circuit of $\D.$ Observe that ${f_1}'\setminus (f_1 \setminus e_1) \subseteq (e_1 \cup e_2) \setminus m$. 
Either ${f_1}' \setminus (f_1 \setminus e_1)$ is a circuit of $\C$ or a subset of this set is a circuit of $\C$ (since $\C$ is a clutter). In either case, we have found a circuit $e_3 \subseteq  (e_1 \cup e_2) \setminus m$ which shows that $m$ is a simplicial vertex.
 \end{proof}

\begin{example}
Although the contraction $\C/7$ in Example~\ref{ex:contraction} is not underclosed, we see that the vertex $6$ is simplicial. Indeed, the only circuits containing $6$ are $e_1=1236$ and $e_2=456.$ Then $(e_1 \cup e_2) \setminus \set{6} = 12345$, and we see that $1235$ is a circuit of $\C/7$ contained in this set. 
\end{example}

\begin{thm}\label{thm:underclosedchordal}
Suppose ${\mathcal C}$ is an underclosed (not necessarily uniform) clutter. Then ${\mathcal C}$ is chordal.
\end{thm}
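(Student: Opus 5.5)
The proof will be a direct application of Lemma~\ref{lem:underclosedchordal}. By Definition~\ref{defn:chordal}, we must show that every minor of $\C$ has a simplicial vertex. Fix an ordering $\{1,2,\dots,n\}$ of $V(\C)$ with respect to which $\C$ is underclosed. Let $\mathcal{B}$ be an arbitrary minor of $\C$, obtained by some sequence of deletions and contractions. By the commutation identities recorded before Definition~\ref{defn: minor}, we may assume that all deletions are performed first and all contractions afterwards. This is the normal form in which Lemma~\ref{lem:contract}, and hence Lemma~\ref{lem:underclosedchordal}, is phrased.

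Next, apply Lemma~\ref{lem:underclosedchordal} with $\D = \C$ and with the minor $\mathcal{B}$ in the role of $\C$ there. Let $m$ be the largest vertex of $\mathcal{B}$ in the fixed ordering. The lemma says that $m$ is a simplicial vertex of $\mathcal{B}$. Since $\mathcal{B}$ was an arbitrary minor, every minor of $\C$ has a simplicial vertex, so $\C$ is chordal.

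The only step that needs care is degenerate minors. If $\mathcal{B}$ has no vertices, there is no largest vertex, and we adopt the convention (implicit in \cite{Russ09}) that the empty clutter, and more generally any clutter with at most one circuit through each vertex, trivially satisfies the condition. In particular, if $m$ lies in fewer than two circuits of $\mathcal{B}$, the simplicial condition of Definition~\ref{defn:simplicial} holds vacuously. The proof of Lemma~\ref{lem:underclosedchordal} already handles distinct $e_1, e_2$ containing $m$, and it uses no uniformity of $\D$, so the result holds for non-uniform underclosed clutters as stated. I expect no real obstacle: the substantive work, namely finding $v \in e_2\setminus e_1$, shifting $f_1$ down via the underclosed property, and passing to a circuit of the minor, is contained in Lemmas~\ref{lem:contract} and~\ref{lem:underclosedchordal}.
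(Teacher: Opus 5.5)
Your proposal is correct and matches the paper's proof, which likewise derives the theorem in one line from Lemma~\ref{lem:underclosedchordal}: the largest surviving vertex of any minor is simplicial. Your remarks on putting minors in deletion-then-contraction form and on degenerate minors (no vertices, or $m$ lying in fewer than two circuits) are harmless additions that the paper leaves implicit.
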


\begin{proof}
    By Lemma~\ref{lem:underclosedchordal}, every minor of an underclosed clutter has a simplicial vertex, which immediately implies the result.
\end{proof}

Theorem \ref{thm:underclosedchordal} tells us that all underclosed clutters are chordal, and a natural question is to address the converse. Note that the contraction $\C / 7$ from Example \ref{ex:contraction} is a clutter that is chordal but not underclosed.
For $k=2$, it is known that there exists uniform $2$-clutters (graphs) that are chordal but not underclosed (interval), see Figure \ref{fig:chordalnotint}. Indeed, for $k=2$ a graph $G$ is interval if and only if $G$ is chordal and co-comparable.  For larger $k$ it is an open question whether one has such a characterization  (see Section \ref{sec:further} for further discussion). 

\begin{figure}
     \begin{subfigure}[b]{0.30\textwidth}
          \centering
          
          \begin{tikzpicture}[scale=0.75, every node/.style={fill=black, circle, inner sep=0pt, minimum size=8pt}]

    \node (v1) at (0:1.25cm) {};
    \node (v2) at (120:1.25cm) {};
    \node (v3) at (240:1.25cm) {};

    \node (p1) at (0:2.5cm) {};
    \node (p2) at (120:2.5cm) {};
    \node (p3) at (240:2.5cm) {};

    \draw (v1) -- (v2);
    \draw (v2) -- (v3);
    \draw (v3) -- (v1);

    \draw (v1) -- (p1);
    \draw (v2) -- (p2);
    \draw (v3) -- (p3);

\end{tikzpicture}
          
          \label{fig:A}
     \end{subfigure}
     \begin{subfigure}[b]{0.30\textwidth}
          \centering
          
          \begin{tikzpicture}[scale=0.75, every node/.style={fill=black, circle, inner sep=0pt, minimum size=8pt}]

    \node (v1) at (0:2.5cm) {};
    \node (v2) at (120:2.5cm) {};
    \node (v3) at (240:2.5cm) {};

    \node (p1) at (180:1cm) {};
    \node (p2) at (300:1cm) {};
    \node (p3) at (60:1cm) {};


    \draw (v1) -- (p2);
    \draw (v1) -- (p3);
    \draw (v2) -- (p1);
    \draw (v2) -- (p3);
    \draw (v3) -- (p1);
    \draw (v3) -- (p2);
    \draw (p1) -- (p2);
    \draw (p1) -- (p3);
    \draw (p2) -- (p3);

\end{tikzpicture}
          \label{fig:B}
     \end{subfigure}
     \begin{subfigure}[b]{0.30\textwidth}
          \centering

          \begin{tikzpicture}[scale=0.75, every node/.style={fill=black, circle, inner sep=0pt, minimum size=8pt}]

    \node (v1) at (0:1.25cm) {};
    \node (v2) at (120:1.25cm) {};
    \node (v3) at (240:1.25cm) {};
    \node (v4) at (0:0cm) {};

    \node (p1) at (0:2.5cm) {};
    \node (p2) at (120:2.5cm) {};
    \node (p3) at (240:2.5cm) {};


    \draw (v1) -- (p1);
    \draw (v2) -- (p2);
    \draw (v3) -- (p3);
     \draw (v1) -- (v4);
    \draw (v2) -- (v4);
    \draw (v3) -- (v4);
\end{tikzpicture}

          \label{fig:C}
     \end{subfigure}
     \caption{$2$-clutters (graphs) that are chordal but not underclosed (interval).}
     \label{fig:chordalnotint}
 \end{figure}

In this direction one can use a coning procedure to produce an infinite family of clutters that are chordal and not underclosed. In what follows, if $\C$ is a clutter on vertex set $V$ we let $\D = \C * v$ denote the clutter obtained by adding a disjoint vertex $v$ to $V$, and defining circuits of $\D$ to consist of all sets $e \cup \{v\}$, where $e \in E(\C)$. In this case we say that $\mathcal{D}$ is a \emph{cone} and $v$ is a \emph{cone vertex}.

\begin{prop} \label{lem:coning} Suppose ${\mathcal C}$ is a clutter that is chordal and not underclosed. Then the cone ${\mathcal C} * v$ is also chordal and not underclosed.
\end{prop}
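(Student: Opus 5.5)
The plan is to prove the two assertions of Proposition~\ref{lem:coning} separately: first that chordality passes to the cone, then that the underclosed property descends from the cone to $\C$ (the contrapositive of the second assertion). Write $\D = \C * v$ throughout.

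\textbf{Chordality.} First I will show that every minor of $\D$ is of one of two kinds.
\begin{enumerate}
\item Suppose the cone vertex $v$ is deleted at some point. By the commutation relations stated before Definition~\ref{defn: minor}, I may perform that deletion first. Since every circuit of $\D$ contains $v$, the clutter $\D \backslash v$ has no circuits. All of its minors then have no circuits either, so any vertex is vacuously simplicial.
\item Suppose $v$ is contracted. Again I may do this first, and $\D/v = \C$. The result is then a minor of $\C$, which has a simplicial vertex because $\C$ is chordal.
\item Suppose $v$ survives. I will check directly that $(\C*v)\backslash w = (\C\backslash w)*v$ and $(\C*v)/w = (\C/w)*v$ for $w \neq v$. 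For the contraction, the minimal sets among $\{(e\cup v)\setminus w\}$ are exactly $f \cup v$ with $f$ minimal among $\{e \setminus w\}$. Hence such a minor is $\mathcal M * v$ for a minor $\mathcal M$ of $\C$.
\end{enumerate}
In the last case, take $u$ simplicial in $\mathcal M$. Any two circuits of $\mathcal M * v$ containing $u$ have the form $e_1\cup v$ and $e_2 \cup v$ with $u \in e_1, e_2$. Simpliciality of $u$ in $\mathcal M$ gives $e_3 \subseteq (e_1\cup e_2)\setminus u$, and then $e_3 \cup v \subseteq ((e_1\cup v)\cup(e_2\cup v))\setminus u$. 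So $u$ is simplicial in $\mathcal M * v$.

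If $\mathcal M$ has no vertices, $v$ is the only vertex of $\mathcal M * v$. Here I must treat the degenerate cases: $\mathcal M$ may have the empty circuit, or no circuits at all. In either case $v$ lies in at most one circuit, so it is vacuously simplicial. Having checked these edge cases, $\D$ is chordal.

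\textbf{Not underclosed.} I argue by contrapositive: suppose $\D$ is underclosed with respect to some ordering of $V \cup \{v\}$, and give $V$ the induced (compressed) ordering. Let $e = a_1 a_2 \cdots a_k \in E(\C)$, and let $b_i \le a_i$ for $2 \le i \le k$ with $b_i \in V$ and $a_1 < b_2 < \cdots < b_k$. I must show $a_1 b_2\cdots b_k \in E(\C)$, which is equivalent to $a_1 b_2 \cdots b_k \cup \{v\} \in E(\D)$. There are two cases.
\begin{enumerate}
\item If $v < a_1$, the circuit $e \cup v$ written in increasing order is $v a_1 a_2\cdots a_k$. Applying the underclosed property with lowered entries $a_1, b_2, \dots, b_k$ gives $v a_1 b_2 \cdots b_k \in E(\D)$, as needed.
\item If $v > a_1$, then $a_1$ is the minimum of both $e\cup v$ and $\{a_1,b_2,\dots,b_k,v\}$. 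I will use the elementary fact that inserting the same element into two sorted sequences preserves coordinatewise domination. Since $(a_1,b_2,\dots,b_k)$ is coordinatewise at most $(a_1,\dots,a_k)$, the sorted version of $\{a_1,b_2,\dots,b_k,v\}$ is coordinatewise at most the sorted version of $e\cup v$, with the same first entry $a_1$. The underclosed property of $\D$ then gives the claim.
\end{enumerate}
Thus $\C$ would be underclosed, a contradiction.

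The main obstacle is this insertion-domination step, which I will prove by a short counting argument: the $i$-th smallest element is at most $x$ if and only if at least $i$ entries are at most $x$, and that count only increases when passing from the $a$'s to the $b$'s. I also need to check that the new set still has distinct entries, which holds because $v \notin V$ and $a_1 < b_2 < \cdots < b_k$.
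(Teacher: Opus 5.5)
Your proposal is correct and follows essentially the same route as the paper. You classify minors of $\C * v$ as circuitless (when $v$ is deleted), minors of $\C$ (when $v$ is contracted), or cones over minors of $\C$, and you show that an underclosed ordering of $\C * v$ restricts to one for $(\C * v)/v = \C$; the only difference is that you actually verify this restriction step (via your two cases and the insertion-domination count) and the degenerate minors, both of which the paper simply asserts.
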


\begin{proof}
Suppose $\C$ is a clutter that is chordal and not underclosed, and let $\D = \C * v$ denote the coned clutter.
 Notice that if $w$ is a simplicial vertex of $\C$, then it is clear that $w$ is a simplicial vertex of $\D$. Therefore, if $\C$ has a simplicial vertex, then so does $\D$. 

Next note that  $\D \backslash v = \varnothing$ and $D / v = \C$, so these minors are chordal. For every vertex $w \in V(\C)$, both the deletion and contraction operations commute with coning with $v$. It follows that every minor of $\D$ is either a minor of $\C$, or a cone over a minor of $\C$. We can conclude that if $\C$ is chordal, then so is $\D$. 

Next note that contracting the cone vertex $v \in \D$ results in a clutter $\C = \D / v$ with circuits $e' \in E(\D / v)$ if and only if $e' \cup \set{v} \in E(\D).$ Thus if $\D$ is a cone that is underclosed under some ordering of the vertex set, then $\C = \D / v$ is also underclosed under the compressed order. The result follows. 
\end{proof}

Using the examples of graphs that are chordal and not interval, Lemma \ref{lem:coning} gives us a natural family of $k$-clutters that are chordal but not underclosed, for any $k \ge 2$.

\begin{rem}
    Note that coning does \emph{not} preserve the property of being underclosed. For instance if one takes $G$ to be the graph consisting of three disjoint edges, then the cone $G * v$ is a $3$-uniform clutter that is not underclosed (see \cite[Remark 40]{BenSecVar}).
\end{rem}

Having established that underclosed clutters are chordal, results of Woodroofe from \cite{Russ09} then give us the following corollaries.

\begin{cor}
If ${\mathcal C}$ is an underclosed clutter, then the independence complex $I({\mathcal C})$ is shellable.
\end{cor}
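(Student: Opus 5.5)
The plan is to invoke Woodroofe's theorem from \cite{Russ09}: if a clutter $\C$ is chordal in the sense of Definition~\ref{defn:chordal}, then its independence complex $\ind(\C)$ is shellable. In fact Woodroofe proves the stronger statement that $\ind(\C)$ is vertex decomposable, from which shellability follows by \cite[Corollary 2.9]{PB80}. (The corollary writes $I(\C)$, but this must mean the complex $\ind(\C)$, since $I(\C)$ denotes the circuit ideal.) Theorem~\ref{thm:underclosedchordal} gives that an underclosed clutter $\C$ is chordal. The corollary then follows by combining the two results.

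To make the argument transparent, I will also recall why Woodroofe's result holds, since this is where the substance lies. The key correspondence is with the simplicial complex operations of Section~\ref{sec:shell}. For a vertex $v$ we have
\[\del_{\ind(\C)}(v) = \ind(\C\backslash v) \quad\text{and}\quad \lk_{\ind(\C)}(v) = \ind(\C/v),\]
where both sides are taken on the vertex set $V\setminus v$. Woodroofe shows that if $v$ is a simplicial vertex of $\C$, then any neighbor of $v$ (a vertex $w \neq v$ lying in a circuit with $v$) is a shedding vertex of $\ind(\C)$. If $v$ lies in no circuit, then $\ind(\C)$ is a cone with apex $v$, which is handled directly.

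The argument then proceeds by induction on the number of vertices, using that minors of chordal clutters are chordal. The class of chordal clutters is closed under deletion and contraction by definition, since every minor of a minor is a minor. So both $\ind(\C\backslash w)$ and $\ind(\C/w)$ are vertex decomposable by induction. Therefore $\ind(\C)$ is vertex decomposable, and hence shellable.

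The only real obstacle is checking the shedding vertex condition: every facet of $\del(w)$ must be a facet of $\ind(\C)$. Suppose instead that $\sigma$ is a facet of $\del(w)$ with $\sigma\cup w \in \ind(\C)$. Then $\sigma\cup v$ must contain a circuit $e_1$ through $v$; this step needs $v\notin\sigma$, which is handled by the choice of $w$. I expect the plan here to use the simplicial property of $v$, applied to $e_1$ and a circuit $e_2 \ni v,w$, to produce a circuit inside $(e_1\cup e_2)\setminus v$, contradicting independence of $\sigma\cup w$. Since the paper allows citing \cite{Russ09}, the proof itself will simply cite that result together with Theorem~\ref{thm:underclosedchordal}.
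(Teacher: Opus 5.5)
Your actual proof, Theorem~\ref{thm:underclosedchordal} combined with Woodroofe's result from \cite{Russ09} that chordal clutters have shellable independence complexes, is exactly the paper's argument. You are also right that $I(\C)$ in the statement should be read as $\ind(\C)$.

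The explanatory sketch, however, rests on a claim that is false for clutters, so it should be removed or repaired. The claim is that a neighbor of a simplicial vertex is a shedding vertex of $\ind(\C)$.

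Take $\C=\{12,13,234\}$ on $[4]$, which is underclosed in the natural order.
\begin{itemize}
\item Vertex $2$ is simplicial: the circuits through it are $12$ and $234$, and $13\subseteq 134$.
\item Vertex $4$ is a neighbor of $2$.
\item The facets of $\ind(\C)$ are $14,23,24,34$, while $\del_{\ind(\C)}(4)$ has facets $23$ and $\{1\}$.
\item Since $\{1\}\subsetneq 14$, the vertex $4$ is not a shedding vertex.
\end{itemize}

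Your argument fails exactly at the step you anticipated. With $\sigma=\{1\}$, $e_1=12$, $e_2=234$, the simplicial property gives $e_3=13\subseteq(e_1\cup e_2)\setminus 2$. But $e_3\not\subseteq\sigma\cup 4$, because $e_2\setminus\{v,w\}$ can contribute vertices outside $\sigma\cup w$. This cannot happen for graphs, where $e_2=vw$. For the same reason, once $\abs{e_2}>2$ the case $v\in\sigma$ is not ``handled by the choice of $w$'': $\sigma\cup w$ need not contain $e_2$.

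What the simplicial property does give is a statement about the whole face $e\setminus v$, for a circuit $e\ni v$: every facet of $\del_{\ind(\C)}(e\setminus v)$ is a facet of $\ind(\C)$. But that deletion is the independence complex of $\C$ with $e\setminus v$ adjoined as a new circuit. That clutter is in general not a minor of $\C$, so your minor-closed induction does not apply to it as stated.

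Consequently the sketch does not support your assertion that $\ind(\C)$ is vertex decomposable. The corollary only needs shellability, which is all the paper takes from \cite{Russ09} here. The vertex decomposability in Corollary~\ref{cor:dualisVD} concerns $\dual(\C)$, not $\ind(\C)$.
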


For the next corollary, recall from Definition \ref{defn:dual} the notion of dual complex $\dual(\C)$ of a $k$-clutter $\C$. This is the simplicial complex whose Stanley--Reisner ideal is the Alexander dual of the circuit ideal $I(\overline{\C})$. In this language we also get the following result from \cite{Russ09}.

\begin{cor}\label{cor:dualisVD}
Suppose ${\mathcal C}$ is an underclosed $k$-clutter. Then $\dual(\C)$ is vertex decomposable.
\end{cor}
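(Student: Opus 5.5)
The plan is to reduce the statement to a result of Woodroofe in \cite{Russ09} connecting chordal clutters to vertex decomposability of Alexander duals. The two needed inputs are Theorem~\ref{thm:underclosedchordal} and the dictionary between $\dual(\C)$ and the independence complex of $\C$.

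First I will identify $\dual(\C)$ with the Alexander dual of $\ind(\C)$. The facets of $\dual(\C)$ are the sets $[n]\setminus e$ with $e$ a $k$-subset that is not a circuit of $\C$. I will use the form of Woodroofe's result that concerns $k$-uniform clutters. In \cite{Russ09} he shows: if a $k$-clutter $\C$ is chordal, then the complement clutter $\overline{\C}$ has an edge ideal $I(\overline{\C})$ with linear quotients. More precisely, he shows that the Alexander dual of the independence complex of $\overline{\C}$ is vertex decomposable, which is the higher-dimensional Fr\"oberg direction.

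The set-level check is as follows. The Stanley--Reisner ideal of $\dual(\C)$ is the Alexander dual of $I(\overline{\C})=I_{\ind(\overline{\C})}$. Hence $\dual(\C)=\ind(\overline{\C})^\vee$. To verify this directly, note that the facets of $\ind(\overline{\C})^\vee$ are the complements of the minimal nonfaces of $\ind(\overline{\C})$. Those minimal nonfaces are exactly the circuits of $\overline{\C}$, i.e.\ the $k$-subsets not in $E(\C)$. This matches Definition~\ref{defn:dual}.

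With this identification in hand, the proof is short. Theorem~\ref{thm:underclosedchordal} shows that the underclosed $k$-clutter $\C$ is chordal in the sense of Definition~\ref{defn:chordal}. Woodroofe's theorem then gives that $\ind(\overline{\C})^\vee=\dual(\C)$ is vertex decomposable. Shellability follows from \cite[Corollary 2.9]{PB80}.

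The main obstacle is making sure the cited statement in \cite{Russ09} has exactly this form, namely chordal $\C$ implies $\ind(\overline{\C})^\vee$ vertex decomposable, and not a variant with the roles of $\C$ and $\overline{\C}$ swapped.

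If Woodroofe's statement is phrased via a shedding-vertex recursion, I would redo the argument directly. Take the largest vertex $m$ supplied by Lemma~\ref{lem:underclosedchordal} as the shedding vertex. The link and deletion of $m$ in $\dual(\C)$ should correspond to duals of the contraction $\C/m$ and the deletion $\C\backslash m$ (with uniformity restored by taking complements within the appropriate layer). Every minor of $\C$ still has a simplicial vertex, so the induction closes. The simplicial property of $m$ is precisely what guarantees that every facet of the deletion is a facet of $\dual(\C)$.
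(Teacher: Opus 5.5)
This is the paper's argument: the corollary is Theorem~\ref{thm:underclosedchordal} combined with Woodroofe's result for chordal $k$-clutters, applied through the identification $\dual(\C)=\ind(\overline{\C})^\vee$. Your opening sentence says $\ind(\C)^\vee$, which is wrong, but your verification afterwards correctly uses $\overline{\C}$, and that orientation is the one consistent with Fr\"oberg's theorem when $k=2$. In your optional fallback the correspondence is reversed: $\lk_{\dual(\C)}(m)=\dual(\C\backslash m)$, while $\del_{\dual(\C)}(m)$ is the dual of the $(k-1)$-uniform clutter $\{e\setminus\{m\} : m\in e\in E(\C)\}$, which is not itself a minor of $\C$, so that induction would need to be set up for the independent $(k-1)$-sets of $\C/m$ rather than for uniform minors.
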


This answers a question from \cite{DocEng}, where the authors ask whether the Alexander duals of cointerval
hypergraphs (which we have seen correspond to $\dual(\C)$ for $\C$ underclosed) are vertex decomposable. From this we conclude that circuit ideals of cointerval $k$-hypergraphs have linear quotients.

\section{Lexicographic shellings}

If $\C$ is a uniform underclosed $k$-clutter, the complex $\dual(\C)$ is vertex decomposable and hence shellable. In this section we show that $\dual(\C)$ is in fact \emph{lexicographically shellable}, for a given ordering of the vertex set (see Theorem \ref{thm:DualIsLexShell}). 
In what follows we assume that $\C$ is an $(n-d)$-uniform clutter on $V=[n]$, which means that $\dual(\C)$ is a $(d-1)$-dimensional complex on $[n]$.

To recall our relevant definitions, first note that any ordering of the vertex set of a simplicial complex $\Delta$ defines an ordering on its facets given by the lexicographical order. We say that $\Delta$ is \emph{lexicographically (lex) shellable} if there exists a vertex ordering for which this induced ordering of the facets defines a shelling of $\Delta$ (see \cite{Lexshelling}). For example, any ordering of the ground set of a matroid $M$ defines a lex shelling of its independence complex $I(M)$, and in fact this property characterizes the class of matroids \cite[Theorem~7.3.4]{BjornerHomology}.  It is known that there exist pure vertex decomposable complexes that are not lexicographically shellable under any order of the vertex set; see \cite{GP24}. Our main result in this section is the following.

\begin{thm}\label{thm:DualIsLexShell}
Suppose $\C$ is a uniform underclosed clutter with respect to the labeling of the vertex set. Then $\dual(\C)$ is lexicographically shellable with respect to this ordering.
\end{thm}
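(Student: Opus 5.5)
The plan is to verify the lexicographic shelling condition directly, by translating it into a statement about the non-circuits $e$ whose complements are the facets of $\dual(\C)$. Write $k=n-d$, so that facets are $F=[n]\setminus e_F$ with $e_F$ a $k$-subset and $e_F\notin E(\C)$. Lex order on facets is given by: $G$ precedes $F$ iff the smallest element of $G\,\triangle\, F$ lies in $G$. It is standard (and I would use this as the working criterion) that a facet order is a shelling iff for every $G$ preceding $F$ there is some $H$ preceding $F$ with $G\cap F\subseteq H\cap F$ and $|H\cap F|=d-1$.

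Such an $H$ has the form $H=(F\setminus\{x\})\cup\{y\}$, that is, $e_H=(e_F\setminus\{y\})\cup\{x\}$ with $y\in e_F$ and $x\notin e_F$. The conditions then become concrete:
\begin{itemize}
\item $G\cap F\subseteq H$ becomes $x\in e_G$;
\item $H$ precedes $F$ becomes $y<x$;
\item $H$ is a facet becomes $e_H\notin E(\C)$.
\end{itemize}
So, given $G$ before $F$, let $a=\min(G\triangle F)$. Then $a\in G\setminus F$, so $a\in e_F\setminus e_G$, and $e_F,e_G$ agree on all elements below $a$. Since $|e_F|=|e_G|$, the set $X=\{x\in e_G\setminus e_F : x>a\}$ is nonempty. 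I will take $y=a$. The task is to show that some $x\in X$ makes $e_F-a+x$ a non-circuit.

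The key tool is the underclosed property in Definition \ref{defn:underclosed}. Suppose, for contradiction, that $e'=e_F-a+x$ is a circuit for every $x\in X$. Replacing $x$ by the smaller element $a$ produces a sorted tuple that is componentwise $\le$ that of $e'$. Hence if $\min(e')=\min(e_F)$, underclosedness forces $e_F\in E(\C)$, which is a contradiction. This settles every case in which $a\neq\min(e_F)$, because then the minimum is untouched.

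The main obstacle is the remaining case $a=\min(e_F)$. Here every element of $e_G$ exceeds $a$, and lowering $x$ to $a$ changes the minimum, so underclosedness cannot be applied to $e'$ directly. My first attempt would instead compare $e'$ with $e_G$ itself. Choose $x=x_0=\min(e_G\setminus e_F)$, and try to show that if $e_F-a+x_0$ is a circuit then $e_G$ is as well, by exhibiting $e_G$ as a componentwise lowering of $e'$ with the same minimum. This works cleanly when $x_0=\min e_G$.

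If common elements of $e_F\cap e_G$ lie below $x_0$, I would iterate instead. Exchange elements one at a time along the sorted sequences, producing a chain of $k$-sets from $e_G$ toward $e_F$, each of which is either a non-circuit (which gives the required $H$) or a circuit. At the first circuit in the chain, apply underclosedness from it to its predecessor, which has the same minimum, and obtain a contradiction. If this case resists, an alternative is to choose $y$ to be a different element of $e_F\setminus e_G$, for instance the largest one, to keep the minimum fixed. The final step is to check that the resulting $H$ genuinely precedes $F$ and contains $G\cap F$.
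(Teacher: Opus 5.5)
\textbf{There is a genuine gap in the case $a=\min(e_F)$.} Your translation of the shelling condition is correct: find $H$ with $e_H=e_F-y+x$, where $y\in e_F$, $x\in e_G\setminus e_F$, $y<x$ and $e_H\notin E(\C)$. Your treatment of the case $a\neq\min(e_F)$ is also correct. The remaining case $a=\min(e_F)$ is where all the difficulty lies, and there your argument does not go through.

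\begin{itemize}
\item Fixing $y=a$ cannot work in general. Take $\C=\{234,235,245\}$ on $[6]$, which is underclosed. The sets $e_F=125$ and $e_G=345$ are non-circuits, so $F=346$ and $G=126$ are facets with $G<_{lex}F$ and $a=1=\min e_F$. Yet both $e_F-1+3=235$ and $e_F-1+4=245$ are circuits.
\item The same example refutes your claim that comparing with $e_G$ ``works cleanly when $x_0=\min e_G$''. Here $x_0=3=\min e_G$, but $e_F-1+3=235$ has minimum $2$, so $e_G$ is not a same-minimum lowering of it.
\item The chain idea does not repair this. A non-circuit in the middle of a chain from $e_G$ toward $e_F$ is not adjacent to $e_F$, so it yields no $H$. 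Nor do you say why consecutive members should be componentwise comparable with equal minima; this cannot hold all along, since $a$ must eventually enter and lower the minimum.
\item Your fallback of taking $y$ to be the largest element of $e_F\setminus e_G$ fails on its own. For $e_F=15$ and $e_G=34$, no element of $e_F\setminus e_G$ other than $a$ lies below an element of $e_G\setminus e_F$.
\end{itemize}

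\textbf{What is missing.} You need a criterion telling you which of your two tools applies, plus the observation that $y$ only needs to lie in $e_F$; it may also lie in $e_G$. Let $p=a=\min e_F$ and let $q$ be the second smallest element of $e_F$.
\begin{itemize}
\item If some $x\in e_G\setminus e_F$ exceeds $q$, take $y=q$. The minimum $p$ is unchanged, so $e_F-q+x\in E(\C)$ would force $e_F\in E(\C)$ by lowering $x$ to $q$.
\item Otherwise $e_G\setminus e_F\subseteq(p,q)$. Then $x_0=\min e_G$, and $e_G\setminus\{x_0\}$ arises from $e_F\setminus\{p\}$ by replacing elements $\ge q$ with elements $<q$. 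So $e_G$ is a componentwise lowering of $e_F-p+x_0$ with the same minimum $x_0$, and underclosedness turns $e_F-p+x_0\in E(\C)$ into the contradiction $e_G\in E(\C)$.
\end{itemize}
In the example above, $3>q=2$, so $y=2$ gives $e_H=135\notin E(\C)$ and $H=246$. This two-case split is exactly the paper's division into $\fin(F)\not\subseteq\sigma$ and $\fin(F)\subseteq\sigma$. The first case is handled by $F(j)$ via Lemma~\ref{lem:bumping}. The second is handled by $F(p_G)$, after showing $p_F<p_G<q_F$, $q_G\le q_F$ and $b_i\le a_i$.
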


In our proof of Theorem \ref{thm:DualIsLexShell}, we will use the following characterization of when a given ordering of facets $F_1, F_2, \dots , F_m$ of a pure simplicial complex is in fact a shelling. Recall that two facets are adjacent if they differ by exactly one vertex. 

\begin{rem}\label{rem:pure_shell}
Adding a facet $F_i$ constitutes a shelling move if and only if the following condition holds: For every $\sigma \in \ideal{F_i} \cap \langle F_1,\dots,F_{i-1}\rangle$, there exists an $F_j$ with $j <i $ such that both $\sigma \subseteq F_j$ and $F_j$ is adjacent to $F_i$.
\end{rem}

We now introduce some notation that will be used in the proof of Theorem~\ref{thm:DualIsLexShell}. Suppose $\Delta$ is any simplicial complex on vertex set $V = [n]$, and let $F \in \Delta$ be a facet. Define $p = p_F$ and $q = q_F$ with $p < q$ to be the two smallest elements of $V$  that are not contained in $F$.
We let $\init(F)$ denote the set of vertices of $F$ that come before $p_F$ and let $\fin(F)$ denote the set of vertices that come after $q_F$.
\begin{align*}
\init(F) &= \{v \in F: v < p_F\} \\
 \fin(F) &= \{v \in F: v > q_F\}
\end{align*}
Note that $\init(F)$ consists of consecutive integers (as do the elements between $p_F$ and $q_F$), whereas $\fin(F)$ may not have this property. With this notation, we have
\[V \setminus F = p_Fq_Fa_3 \cdots a_k.\] 
See Example \ref{ex:pandq} for an example.

\begin{defn}\label{defn:F(j)}
    Assume $\C$ is a uniform clutter on vertex set $V=[n]$. Let $F$ be a facet of $\dual (\C)$ and $j \in F \setminus \init(F).$ Let $i \in V \setminus F$ be the largest vertex less than $j$. We define $F(j) := \parens{F \setminus \set{j} } \cup \set{i}.$

\end{defn}

We refer to Example \ref{ex:pandq} for an example. Observe in Definition~\ref{defn:F(j)} above that $i$ is guaranteed to exist because we assume $j \notin \init(F),$ which means that $j > p_F$ and thus $F(j)$ is well-defined. The following properties of $F(j)$ follow immediately from the definition.

\begin{lem}\label{lem:F(j)_properties}
Suppose $\C$ is a uniform clutter, and let $F$ be a facet of $\dual(\C)$ with $j \in F \setminus \init(F)$. Then $F(j)$, as defined above, satisfies the following.
\begin{itemize}
        \item $j \notin F(j)$,
        \item $|F(j) \cap F| = d$, so that $F(j)$ and $F$ differ in one vertex, and
        \item $F(j)$ is lex-smaller than $F.$ 
    \end{itemize}
\end{lem}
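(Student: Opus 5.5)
The three properties are immediate once we fix notation. Write $F(j) = (F\setminus\{j\})\cup\{i\}$, where $i$ is the largest element of $V\setminus F$ with $i<j$. Such an $i$ exists because $j\notin\init(F)$ and $j\in F$ force $j>p_F$, and $p_F\in V\setminus F$. We check each item directly from this description; no property of $\C$ beyond uniformity is needed. Uniformity only guarantees that all facets of $\dual(\C)$ have size $d$, which makes the cardinality statement meaningful.

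First, $j\notin F(j)$: we removed $j$, and the element we added is $i\neq j$, since $i\notin F$ while $j\in F$.

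Second, $|F(j)\cap F| = |F\setminus\{j\}| = d-1$, because $i\notin F$. Since $|F(j)|=d$ as well, $F(j)$ and $F$ differ in exactly one vertex, i.e.\ they are adjacent. The statement's "$d$" should be read with this in mind: the intended content is that the two sets share all but one element.

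Third, for lex order, compare $F$ and $F(j)$ as sorted sequences. Their symmetric difference is $\{i,j\}$, with $i\in F(j)$, $j\in F$ and $i<j$. The smallest element of the symmetric difference is therefore $i$, which lies in $F(j)$. Equivalently, the first position where the sorted lists differ carries $i$ in $F(j)$ versus something larger in $F$. Hence $F(j)$ precedes $F$ lexicographically.

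Note that the lemma makes no claim that $F(j)$ is itself a facet of $\dual(\C)$. That question, presumably requiring the underclosed hypothesis, is deferred. There is no real obstacle here; the only care needed is in the existence of $i$ and in correctly interpreting the cardinality statement.
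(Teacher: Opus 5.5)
Your proposal is correct and matches the paper, which simply asserts that these properties follow immediately from the definition of $F(j)$. Your direct check is exactly that routine verification: $i$ exists because $p_F<j$, and for the lex comparison the symmetric difference is $\{i,j\}$ with $i<j$ and $i\in F(j)$. You are also right that the intersection has size $d-1$, not $d$, since facets of $\dual(\C)$ have $d$ elements. This agrees with the paper's own Example~\ref{ex:pandq}, where $d=6$ and $|F(7)\cap F|=5$.
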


In our constructions, we will want $F(j)$ to be a facet of $\dual(\C)$. This will not be true in general, but will be guaranteed if $j \in \fin(F)$ and $\C$ is underclosed. In particular, we have the following.

\begin{lem}\label{lem:bumping}
    Assume $\C$ is a uniform underclosed clutter on vertex set $V = [n]$, and let $F$ be a facet of $\dual(\C).$ 
    Then for every $j \in \fin(F)$ we have that $F(j) \in \dual(\C)$.
\end{lem}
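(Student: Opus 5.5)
The plan is to translate the statement into one about circuits of $\C$ and then apply the underclosed property once. By Definition~\ref{defn:dual}, a $d$-subset $G \subseteq [n]$ is a facet of $\dual(\C)$ if and only if its complement $[n]\setminus G$ is a $k$-subset (with $k = n-d$) that is \emph{not} a circuit of $\C$. So set $e = V \setminus F = p_F q_F a_3 \cdots a_k$, written in increasing order; then $e \notin E(\C)$. Let $i$ be the largest element of $e$ less than $j$, as in Definition~\ref{defn:F(j)}. The complement of $F(j)$ is
\[ e' := (e \setminus \{i\}) \cup \{j\}, \]
which is again a $k$-subset, by Lemma~\ref{lem:F(j)_properties}. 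It therefore suffices to show that $e' \notin E(\C)$.

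Argue by contradiction and suppose $e' \in E(\C)$. The key observations concern positions in the increasing orderings. Since $j \in \fin(F)$ we have $j > q_F$, so $i \ge q_F > p_F$. Hence $p_F$ survives in $e'$ and is still its minimum, and $i$ is not the first entry of $e$.

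Next, by the choice of $i$, no element of $e$ lies strictly between $i$ and $j$. So when the increasing orderings of $e$ and $e'$ are listed, $j$ occupies exactly the slot of $e'$ that $i$ occupies in $e$, and every other slot carries the same element. Thus $e$ is obtained from $e' = p_F b_2 \cdots b_k$ by replacing one non-initial entry with a smaller value $i < j$, keeping all other entries. The underclosed property (Definition~\ref{defn:underclosed}), applied to the circuit $e'$ with first element $p_F$ fixed, then forces $e \in E(\C)$. This contradicts $F$ being a facet. Hence $e' \notin E(\C)$, and $F(j)$ is a facet of $\dual(\C)$.

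The main point needing care is the position bookkeeping: showing that swapping $i$ for $j$ preserves the relative order of all the other entries. This is exactly why $i$ is chosen as the largest element of $V \setminus F$ below $j$. It is also why we need $j \in \fin(F)$ rather than merely $j \notin \init(F)$: this guarantees that the replaced entry is not the minimum $p_F$, which the underclosed condition keeps fixed.
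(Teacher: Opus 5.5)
Your proof is correct and is essentially the paper's argument. You pass to the complement $V\setminus F\notin E(\C)$, note that $V\setminus F(j)$ is obtained by putting $j$ in the same slot of the increasing order that $i$ occupied, and apply the underclosed property; your contradiction is just the paper's contrapositive. Your explicit check that $i \ge q_F > p_F$, so the replaced entry is not the fixed minimum, fills in a detail the paper leaves implicit.
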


\begin{proof}
    Suppose $F=v_1v_2 \dots v_d$ is a facet of $\dual(\C)$, and assume $j \in \fin(F)$. As above, let $i \in V\setminus F$ be the largest vertex less than $j$, and let $F(j) = (F \setminus \{j\}) \cup \{i\}.$

    Now consider the complement $V \setminus F = w_1 \dots w_{n-d}$ and let $k$ be the index such that $w_k=i.$ Thus $V \setminus F = w_1 \dots w_{k-1} i w_{k+1} \dots w_{n-d}$ and $V \setminus F(j) = w_1 \dots w_{k-1} j w_{k+1} \dots w_{n-d}$.
    
    Recall that by definition of $\dual(\C)$, we have that $F \in \dual(\C)$ if and only if $V \setminus F \notin \C.$ Since $V \setminus F \notin \C$ by assumption, we see that $V \setminus F(j) \notin \C$ by applying the underclosed property of $\C$. Thus $F(j) \in \dual(\C).$
\end{proof}

We refer to Example \ref{ex:pandq} for an illustration of Definition \ref{defn:F(j)} and Lemmas \ref{lem:F(j)_properties} and \ref{lem:bumping}.

\begin{example}\label{ex:pandq}
Suppose $\C$ is an underclosed $3$-clutter on vertex set $V = [9]$. If $F = 124579$ is a facet of $\dual(\C)$ then by definition $V \setminus F = 368 \notin \C$. Following the notations above, we have $p_F=3$, $q_F = 6$, $\init(F) = \{1,2\}$, and $\fin(F) = \{7,9\}.$

Since $\C$ is underclosed and $368 \notin \C$, we see that $V \setminus F(7) = 378$ and $V \setminus F(9) = 369$ are not in $\C$. This in turn implies that $F(7) = 124569$ and $F(9) = 124578$ are facets of $\dual(\C)$ that are lex-smaller than $F$, as desired. Furthermore, note that $|F(7) \cap F| = |F(9) \cap F| = 5$.

Lastly, for example, consider $F(5) = 123479$, which is a well-defined set because $5 \in F \setminus \init (F)$. However, we cannot apply the underclosed property to show that $V \setminus F(5) = 568 \notin \C$, so $F(5)$ is not necessarily a facet of $\dual(\C)$.
\end{example}

We can now provide a proof of our main result in this section.

\begin{proof}[Proof of Theorem \ref{thm:DualIsLexShell}]
We prove that $\dual(\C)$ is lex shellable by induction on the number of facets. If $\dual(\C)$ consists of a single facet, the result follows.

Now assume $\dual(\C)$ has more than one facet. Consider a facet $F= v_1 v_2 \dots v_d$ of $\dual(\C)$, and assume that 
$$
\dual(\C)_{<F} = \ideal{ G : G ~\text{is a facet of}~ \dual(\C) ~\text{and}~ G <_{lex} F }
$$
is nonempty and shellable under the lexicographic order. We want to show that adding $F$ also constitutes a shelling move. For this we check the condition spelled out in Remark \ref{rem:pure_shell}. Our analysis will depend on 
the structures of $\init(F)$ and $\fin(F)$ as defined above.

For the remainder of the proof, let $\sigma \in \ideal{F} \cap \dual(\C)_{<F}$ and let $G$ be a facet of $\dual(\C)_{<F}$ that contains $\sigma.$ Let 
\begin{align*}
    V \setminus F &= p_F q_F a_3 \dots a_{n-d} \; \; ~ \text{and} \\
    V \setminus G &= p_G q_G b_3 \dots b_{n-d}.
\end{align*}
We will show that $\sigma$ is contained in facet of $\dual(\C)_{<F}$ that is adjacent to $F$.

First assume that $\fin(F) \not \subseteq \sigma$, i.e. there is at least one $j \in \fin(F)$ such that $j \notin \sigma.$ Then Lemma~\ref{lem:bumping} provides an adjacent facet $F(j)$ such that $F(j)$ is lex-smaller than $F$ and $\sigma \subseteq F(j).$ 

Consider the case where $\fin(F) \subseteq \sigma$. In this case we will show that $F(p_G)$ is well-defined (meaning that $p_G \in F$ and $p_F < p_G$), that $V \setminus F(p_G)$ has the form
\begin{align}\label{eq:F(p_G)}
    V \setminus F(p_G) = p_G q_F a_3 \dots a_{n-d}
\end{align}
(meaning additionally that $p_G < q_F$), and finally that $F(p_G) \in \dual(\C)$. This will immediately imply that adding $F$ is a shelling move (via Remark~\ref{rem:pure_shell}), which will complete the proof.

We will establish the first two claims from the previous paragraph by showing $p_F < p_G < q_F$. Since $G$ is lex smaller than $F$, we have $p_F \leq p_G$. Thus if $p_F \notin G$, we must have $p_F = p_G$. In this case we then have that $q_F \leq q_G$, and thus $F \setminus \fin(F) \subseteq G$. But since $\fin(F) \subseteq \sigma \subseteq G$, we have that $F = G$, a contradiction. From this we conclude that $p_F < p_G$.

Assume there are $k$ vertices in $V \setminus F$ that are greater than $q_F.$ Recalling that $\fin(F) \subseteq \sigma \subseteq G,$ this implies that there are at most $k$ vertices in $V \setminus G$ that are greater than $q_F.$ Since $\abs{F}=\abs{G}$, this means $G$ is missing at least two vertices that are less than or equal to $q_F$. Therefore $q_G \le q_F$ and thus $p_G < q_F$. Since we have established that $p_F < p_G < q_F$, we now see that $F(p_G)$ is well-defined and that $V \setminus F(p_G) = p_G q_F a_3 \dots a_{n-d}$ claimed in \eqref{eq:F(p_G)} above. 

To complete the proof, we will now show that $F(p_G) \in \dual(\C)$. Observe that we have already established that $q_G \le q_F$. We will now show that $b_i \le a_i$ for all $i=3,\dots,n-d$ by a similar argument. Since $\fin(F) \subseteq \sigma$ and $\sigma \subseteq G$, we see that if $v > q_F$ and $v \ne a_j$ for any $j$, then $v \ne b_i$ for any $i$. 
Equivalently, if $b_i$ is among $V \setminus G = p_G q_G b_3 \dots b_{n-d}$ and $b_i > q_F$, then $b_i$ also appears in $V \setminus F = p_F q_F a_3 \dots a_{n-d}.$

Assume there exists some $i \geq 3$ such that $b_i > a_i$ (so that $b_i > q_F$). If $b_i \ne a_j$ for any $j$, then we contradict the above observation. If instead $b_i = a_j$ for some $j > i$, we consider the number of vertices after these. There will be $(n-d)-i$ vertices after $b_i$ but only $(n-d)-j$ ``allowable'' vertices from $V \setminus F.$ This is also a contradiction. Therefore $b_i \le a_i$ for all $i$.

Finally, recall that $V \setminus G = p_G q_G b_3 \dots b_{n-d} \notin \C$ by assumption. Since $\C$ is underclosed, the inequalities from above imply that $V \setminus F(p_G) = p_G q_F a_3 \dots a_{n-d} \notin \C$, and thus $F(p_G) \in \dual(\C).$ This completes the proof.
\end{proof}

The following example illustrates the above proof.
 
\begin{example}\label{ex:Illustrate5.1}
Suppose that ${\mathcal C}$ is an underclosed $4$-clutter on vertex set $V = [9]$ and that we wish to add the facet $F = 23478$ to $\dual(\C)_{<F}.$ 
Here we have 
$$
V \setminus F = 1569
$$
and $p_F = 1$, $q_F = 5$, $\init(F) = \varnothing$, and $\fin(F) = \{7,8\}$.

Consider some $\sigma \in \ideal{F} \cap \dual(\C)_{<F}$. If $\fin(F) \not\subseteq \sigma$, then Lemma~\ref{lem:bumping} gives a facet of $\dual(\C)_{<F}$ that is adjacent to $F$ and contains $\sigma$.

If instead $\fin(F) \subseteq \sigma$, there exists a facet $G$ in $\dual(\C)_{<F}$ that contains $\sigma$ and therefore contains $\fin(F) = \{7,8\}$. In the proof of Theorem~\ref{thm:DualIsLexShell}, we see that $p_F < p_G < q_F$, i.e., $1 < p_G < 5.$  
Thus we get $\init(G)$ is either $ \{1\}$ or $\{1,2\}$ or $\{1,2,3\}$, which correspond to $p_G=2,3,4$ respectively.

Note that the lex smallest $5$-sets containing these candidates for $\init(G)$ and containing $\{7,8\}$ are $F(2) = 13478, F(3) = 12478, F(4) = 12378$, respectively. In the proof of Theorem \ref{thm:DualIsLexShell}, we see that one of these must be a facet of $\dual(\C)_{<F}$. Note that this facet will be adjacent to $F$ and contains $\sigma$, as desired.

For example, suppose $G = 12678$, which means that $p_G=3$ and that $V \setminus G = 3459 \notin {\mathcal C}$. Since ${\mathcal C}$ is underclosed, this implies that $V \setminus F(3) = 3569 \notin {\mathcal C}$, which in turn implies that $F(3) = 12478 \in \dual(\C)_{<F}.$
\end{example}

Recall that a shellable $d$-dimensional simplicial complex is homotopy equivalent to a wedge of $d$-spheres. The number of such spheres corresponds to the number of \emph{homology facets} (by definition a facet $F$ in the shelling order whose restriction set is $F$ itself, see \cite[Section 4]{BjoWac}). A consequence of our proof of Theorem \ref{thm:DualIsLexShell} is that one can detect certain sets of these facets.

\begin{rem}\label{rem:homology} 
In the proof of Theorem \ref{thm:DualIsLexShell}, if $F=v_1v_2 \dots v_{d}$ is such that $v_1 \ge 3$, then all vertices of $F$ are in $\fin(F).$ Lemma \ref{lem:bumping} then implies that the boundary of $F$ is already present in $\dual(\C)_{<F}$. 
Thus in our lex shelling of $\dual(\C)$, any such facet $F$ is a \emph{homology facet}. On the other hand, homology facets in this shelling can, in general, have $v_1 < 3$ (see Example \ref{ex:complete_shelling}).
\end{rem}

We end this section with an underclosed complex $\C$ and a lex shelling of its dual.

\begin{example}\label{ex:complete_shelling}
    Let $\C$ be the $3$-clutter on vertex set $V=[6]$ with circuits
    $$
    E(\C) = \set{123,124,125,134,135,234,235,236,245,345,346}.
    $$
    We see that $\C$ is underclosed with the standard order on $[6].$ There are nine $3$-element sets of $V$ that are not circuits of $\C$, and the complements of these are the facets of $\dual(\C).$ Therefore we get
    $$
    \dual(\C) = \ideal{123,124,134,135,234,235,236,245,345}.
    $$
    The above order on the facets of $\dual(\C)$ is a lex shelling. We see that both $234$ and $345$ are homology facets of $\dual(\C)$, so $\dual(\C)$ is homotopy equivalent to a pair of $2$-spheres. 
\end{example}

\section{Further questions}\label{sec:further}

We end with some open questions and suggestions for further avenues of research. In Theorem \ref{thm:underclosedchordal} we prove that underclosed clutters form a subclass of the chordal clutters introduced by Woodroofe, and a natural question is to understand the gap in these two classes.

For instance, recall that if $M$ is matroid its collection of circuits ${\mathcal C}(M)$ forms a chordal clutter. Our first question is then the following.

\begin{question}
For which matroids $M$ is the clutter ${\mathcal C}(M)$ underclosed?
\end{question}

We next address the question of characterizing which chordal clutters are underclosed. The hope is to extend the $d=2$ case, where there is a well-known description of which interval graphs are chordal.
For this, recall that a graph $G$ is a \emph{comparability graph} if it is the underlying graph of some partial order $P$; in particular adjacent vertices are the comparable elements in $P$.
By definition $G$ is a \emph{cocomparability} graph (also called \emph{weakly closed} graph in \cite{BenSecVar}) if it is the complement of a comparability graph.  We then have the following result.

\begin{thm}\cite{GilHoff-Comp&Int}
A graph $G$ is interval if and only if $G$ is chordal and is a cocomparability graph.
\end{thm}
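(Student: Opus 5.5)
The statement is the classical Gilmore--Hoffman characterization, and I would prove it through vertex orderings, using the underclosed description of interval graphs from Definition~\ref{defn:underclosed}. For $k=2$ this says: $G$ is interval if and only if there is an ordering $v_1<\dots<v_n$ such that whenever $ac\in E(G)$ with $a<b<c$, also $ab\in E(G)$.

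For the forward direction I take such an ordering. Chordality already follows from Theorem~\ref{thm:underclosedchordal}, since induced subgraphs are deletions, so every induced subgraph has a simplicial vertex. For cocomparability, I orient each non-edge $ab$ of $G$ with $a<b$ as $a\to b$. It remains to check transitivity. Suppose $ab\notin E$ and $bc\notin E$ with $a<b<c$. If $ac\in E$, then the underclosed property applied to $ac$ gives $ab\in E$, a contradiction. So $\overline G$ is the comparability graph of a suborder of the vertex order.

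The converse is the substantive part. Assume $G$ is chordal and $\overline G$ is transitively oriented by a partial order $P$. I would use the maximal cliques. By the Fulkerson--Gross / Gilmore--Hoffman criterion, $G$ is interval if and only if its maximal cliques can be linearly ordered so that, for each vertex, the cliques containing it are consecutive. The plan has three steps.

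First, define a relation on maximal cliques: $C_1\prec C_2$ if some $x\in C_1$ and $y\in C_2$ satisfy $x<_P y$. Second, show $\prec$ is a strict linear order. Distinct maximal cliques contain non-adjacent vertices, so any two cliques are comparable. To rule out both $C_1\prec C_2$ and $C_2\prec C_1$, I would use chordality: the no-induced-$C_4$ condition together with transitivity of $P$ forces a contradiction. Acyclicity uses the same kind of argument. Third, check consecutiveness. If $v\in C_1\cap C_3$ and $C_1\prec C_2\prec C_3$ but $v\notin C_2$, then $v$ is non-adjacent to some $w\in C_2$. Comparing $v$ and $w$ in $P$ against the witnesses of $C_1\prec C_2$ and $C_2\prec C_3$ then produces either an induced $C_4$ or a violation of transitivity.

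I expect the main obstacle to be showing that $\prec$ is antisymmetric and transitive. The witnesses for different comparabilities may involve different vertices, so relating them requires a careful case analysis. My first attempt would use the fact that in a chordal graph two maximal cliques with nonempty intersection are separated by a minimal separator that is a clique. I would argue that this separator lies entirely on one side with respect to $P$.

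Alternatively, since the result is classical and is only cited here, I could simply invoke \cite{GilHoff-Comp&Int}. In that case the proof reduces to the forward direction above plus a reference.
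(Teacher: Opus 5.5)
The paper gives no proof of this statement. It is quoted from \cite{GilHoff-Comp\&Int} only as motivation for Question~\ref{ques:intersection}. So your proposal is a genuinely different, self-contained route, and it is correct.

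Your forward direction fits the paper's framework well. The left-endpoint order is underclosed, and the largest vertex of every induced subgraph is simplicial. This follows from Lemma~\ref{lem:underclosedchordal}, or directly: neighbours $u<w<m$ of $m$ force $uw\in E(G)$. Your upward orientation of non-edges is transitive because $a<b<c$ with $ab\notin E(G)$ already forces $ac\notin E(G)$. This is exactly the $k=2$ case of the paper's remark in Section~\ref{sec:further} that underclosed clutters are weakly closed.

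Your converse is essentially the original Gilmore--Hoffman clique-ordering argument, and it needs only the easy direction of the consecutive-cliques criterion. The case analysis you flag as the main obstacle closes quickly, with no separator argument. Write $u\sim v$ for adjacency.

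\emph{Antisymmetry.} Take $x<_P y$ and $y'<_P x'$ with $x,x'\in C_1$ and $y,y'\in C_2$. If $x=x'$ or $y=y'$, transitivity makes two distinct vertices of one clique comparable, which is impossible. Otherwise, $x\not\sim y'$ or $x'\not\sim y$ would, in either orientation, give $x<_P x'$ or $y'<_P y$. Hence $x\sim y'$ and $x'\sim y$, so $x\,x'\,y\,y'$ is an induced $4$-cycle.

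\emph{Transitivity.} Take $a<_P b$ and $b'<_P c$ with $b,b'\in C_2$. If $b=b'$, or $a\not\sim b'$, or $b\not\sim c$, transitivity gives $a<_P c$; the wrong orientations would give $b'<_P b$. Otherwise $a\sim b'$ and $b\sim c$. Then $a\not\sim c$ forces $a<_P c$, since $c<_P a$ gives $c<_P b$. If instead $a\sim c$, then $a\,b'\,b\,c$ is an induced $4$-cycle.

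\emph{Consecutiveness.} This needs no new analysis. If $v<_P w$ then $C_3\prec C_2$, and if $w<_P v$ then $C_2\prec C_1$; either contradicts antisymmetry.

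What your route buys is a proof that uses only the absence of induced $4$-cycles, which is Gilmore--Hoffman's original hypothesis. It also shows clearly how the easy direction sits inside the underclosed / weakly closed / chordal picture. What it does not provide is a vertex-ordering proof of the converse. So it does not directly suggest how to attack the paper's question of whether chordal, weakly closed clutters are underclosed for larger $k$.
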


Cocomparability graphs also have algebraic applications to binomial edge ideals, see for instance in \cite{Seccia23}.
It would be interesting to generalize the above result to clutters with larger circuits. Hence we ask the following.

\begin{question}\label{ques:intersection}
 Can we recover the class of underclosed clutters as the intersection of chordal clutters with some other class? 
\end{question}

One approach to answer this would be to make use of other characterizations of these classes in terms of the existence of orderings of the vertex set with certain properties. In particular, the class of cocomparability graphs are those graphs that admit an ordering of the vertex set $V$ that satisfies: for all $a<b<c$ we have
\begin{equation*}
ac \in G \implies ab\in G \text{ or } bc \in G.
\end{equation*}

In \cite{BenSecVar} the authors generalize the above idea to define \emph{weakly closed} clutters. One can see that underclosed clutters are weakly closed, and also weakly closed $2$-clutters are the same as cocomparability graphs. We then ask the following particular version of Question \ref{ques:intersection}.

\begin{question}
    Suppose $\C$ is a clutter that is chordal and weakly closed. Is it true that $\C$ is underclosed? 
\end{question}

Our next question involves the complexity of recognizing underclosed clutters.  Again our motivation comes from graph theory: although checking the interval property for graph naively requires considering all possible orderings of the vertex set, there is in fact a recognition algorithm \cite{Testinginterval} that runs in linear time (in $|V| + |E|$). We ask if there is a similar algorithm for underclosed clutters.

\begin{question}
What can be said about the computational complexity of detecting the underclosed property for a given clutter?
\end{question}

\subsection{Other classes of hypergraphs}

As mentioned in the introduction, many classical classes of graphs have been generalized to the (typically uniform) hypergraph setting with the hope of extending various combinatorial and algebraic applications. This includes the chordal hypergraphs discussed above, and here we mention some others that seem relevant to this general philosophy.

In \cite{Perfect}, Chudnovsky and Kalai studied a notion of perfect hypergraphs, extending the notion from the setting of graphs to $k$-uniform hypergraphs. Recall that a graph $G$ is \emph{perfect} if every subgraph of $G$ (including $G$ itself) has the property that the chromatic number equals the size of the maximum clique.

In \cite{Threshold}, Reiterman, R\"odl, \v Si\v najov\'a, and T\r{u}ma compare various notions of threshold hypergraphs introduced by Golumbic in \cite{Golumbic}.
The class of threshold graphs have many characterizations. For instance, it is known that a graph $G$ is threshold if and only if there exists an ordering of its vertex set such that whenever $ab$ is an edge, we have that $cd$ is an edge for all $c \leq a$ and $d \leq b$ (recall our convention that edges are listed as increasing pairs). From this definition it is clear that threshold graphs are underclosed (interval).

In the classical case, it is known that interval graphs are chordal and hence perfect. It would be interesting to see how underclosed clutters relate to these other classes of hypergraphs discussed above. We leave this for future work.

\begin{question}
How do underclosed clutters relate to the various notions of threshold and perfect hypergraphs from the literature?
\end{question}

\section*{Acknowledgements}

We thank Bruno Benedetti for valuable discussions. Dochtermann was partially supported by Simons Foundation Grant $\#964659$.

\bibliographystyle{amsplain}
\bibliography{references}

\end{document}